\newcommand{\bbE}{\mathbb{E}}
\newcommand{\bbP}{\mathbb{P}}
\newcommand{\bbZ}{\mathbb{Z}}
\newcommand{\bbT}{\mathbb{T}}
\newcommand{\be}{\mathbf{e}}
\newcommand{\bone}{\mathbf{1}}
\newcommand{\cF}{\mathcal{F}}
\newcommand{\cS}{\mathcal{S}}
\newcommand{\Unif}{\mathsf{Uniform}}
\newcommand{\Ber}{\mathsf{Ber}}
\newcommand{\Rad}{\mathsf{Rademacher}}
\newcommand{\ind}{\mathds{1}}
\newcommand{\Cay}{\mathrm{Cay}}
\newcommand{\convas}{\xrightarrow{\text{a.s.}}}
\newcommand{\convd}{\xrightarrow{d}}
\let\tilde\widetilde
\theoremstyle{plain}
\newtheorem{theorem}{Theorem}[section]
\newtheorem{corollary}{Corollary}[section]
\newtheorem{proposition}{Proposition}[section]
\newtheorem{lemma}{Lemma}[section]
\newtheorem{conjecture}{Open problem}[section]
\theoremstyle{definition}
\theoremstyle{remark}
\newtheorem{remark}{Remark}[section]
\title[Elephant random walks on infinite Cayley trees]{Elephant random walks on infinite Cayley trees}
\author[S. S. Mukherjee]{Soumendu Sundar Mukherjee}
\address{
    Statistics and Mathematics Unit \\
    Indian Statistical Institute \\
    203 B.T. Road, Kolkata 700108 \\
    West Bengal, India.
}
\email{ssmukherjee@isical.ac.in}
\begin{document}

\begin{abstract}
We introduce a generalisation of Sch\"{u}tz and Trimper's elephant random walk to finitely generated groups. We focus on the simplest non-abelian setting, i.e. groups whose Cayley graphs are homogeneous trees of degree $d \ge 3$. We show that the asymptotic speed of the walk does not depend on the memory parameter $p \in [0, 1)$ and equals $\frac{d - 2}{d}$, the asymptotic speed of simple random walk on these graphs. We also establish upper bounds on the rate of convergence to the limiting speed. These upper bounds depend on $p$ and exhibit a phase transition at the critical value $p_d = \frac{d + 1}{2d}$. Numerical experiments suggest that these upper bounds are tight. Along the way, we also obtain estimates on the return probability.

\end{abstract}

\keywords{Elephant random walk; step-reinforced random walk; Bethe lattice; Cayley trees; P\'{o}lya's urn}
\subjclass[2020]{60G50, 82C41, 60K99, 60G42}

\maketitle
\thispagestyle{empty}

\section{Introduction}
The Elephant Random Walk (ERW) on $\bbZ$ was introduced by Sch\"{u}tz and Trimper \cite{schutz2004elephants} to model the effect of memory in random walks and is known to exhibit anomalous diffusion. In the ERW, whose name alludes to the long memory attributed to elephants, the walker starts at the origin and takes their first step randomly to the left or right with equal probability. After this, at each time step $n > 1$, the walker chooses a previous epoch $k$ uniformly at random from $\{1, 2, \ldots, n - 1\}$ and repeats the step taken at time $k$ with probability $p$ (which is called the \emph{memory parameter}), or takes the opposite step with probability $1-p$. This reinforcement of past steps creates a long-range memory effect, influencing the walk’s future behavior in non-trivial ways.

\cite{baur2016elephant} observed a connection between the ERW and urn models, utilising this to establish the long-term behavior of ERW across all memory regimes. Notably, the ERW undergoes a phase transition at $p = 3/4$; below this threshold, the walk is diffusive, while above it, the walk becomes superdiffusive. Some of these results were also obtained in \cite{coletti2017central}, directly using martingales. Further refinements (e.g., a quadratic strong law and a law of the iterated logarithm in the regime $p \le 3/4$) were obtained by \cite{bercu2017martingale} using martingale techniques. \cite{coletti2021asymptotic} established that the ERW is recurrent for $p \le 3/4$, and transient otherwise.

\cite{bercu2019multi} and \cite{bertenghi2022functional} studied the extension of the ERW to $\bbZ^d$, also called the multi-dimensional elephant random walk (MERW), and established various limit theorems. Recent studies, such as \cite{qin2025recurrence} and \cite{curien2024recurrence}, have investigated recurrence and transience in MERW. \cite{qin2025recurrence} established that for $d \ge 3$, the walk is transient, whereas for $d = 2$, the walk becomes transient for $p \ge 5/8$. Recurrence in the sub-critical regime $p < 5/8$ for $d = 2$ was also established by \cite{curien2024recurrence} by a rough comparison with the standard plane random walk.

Beyond the classical lattice setting, random walks have been extensively studied on a variety of infinite graphs. Indeed, a fruitful direction of modern research in both probability theory and statistical physics has been the investigation of how the large-scale geometric properties of a space are reflected in the behaviour of random walks on that space. Regular infinite graphs arise naturally as Cayley graphs of finitely generated groups. An important example is the infinite $d$-ary tree $\bbT_d$ (also known as the Bethe lattice in statistical physics, where it has been widely used to simplify problems that are difficult to solve on regular lattices), which arises as the Cayley graph, with respect to the standard generators, of $\bbZ^{* d_1} * \bbZ_2^{*d_2}$, with $d = 2d_1 + d_2$, where $*$ denotes the free product of groups. The simple (i.e. nearest neighbour) random walk (SRW) on $\mathbb{T}_d$ is known to be ballistic for $d \ge 3$, with speed $\frac{d-2}{d}$, and, a fortiori, transient (see, e.g., Chapter 13 of \cite{lyons2017probability}). In general, the study of random walks on groups has uncovered profound connections between probability theory and geometric group theory.

Motivated by this, in this article, we introduce elephant random walks on finitely generated groups, extending the notion of random walks with long-range memory beyond classical lattices, the goal being to explore how such memory interacts with the geometry of the group. As a first step in this broader program, we focus on the infinite $d$-ary tree $\bbT_d$ for $d \geq 3$, viewed as the Cayley graph of a finitely generated group. We show that memory has no impact on the asymptotic speed (and hence recurrence/transience) of the walk, thus matching the escape rate $\frac{d - 2}{d}$ of the SRW. We also establish upper bounds on the rate of convergence to the asymptotic speed. These upper bounds exhibit a phase transition at $p = \frac{d + 1}{2d}$, below which they are of order $n^{-1/2}$ and slower otherwise. Numerical experiments suggest that these upper bounds are tight. We also obtain estimates on the return probability, which decays exponentially for any $0 \le p < 1/2$ such that $(p, d) \ne (0, 3)$. Based on extensive numerical experiments, we present several open problems on fluctuations around the limiting speed and the decay of the return probability in all memory regimes.

We would like to emphasize here that the non-commutative and non-Markovian nature of the ERW on a non-abelian group makes its analysis challenging. Indeed, due to the lack of the Markov property, a direct potential-theoretic approach is infeasible. For the same reason, it is not clear how tools from sub-additive ergodic theory could be applied. While our proofs are martingale-based, due to non-commutativity, they are of a rather different flavour from the (also martingale-based) techniques used to analyse elephant random walks on $\bbZ^d$. Indeed, for the MERW, the position of the walker and hence their distance from the origin can be written succinctly in terms of the step counts along the various axis directions in the past (see, e.g., Eq. (13) of \cite{qin2025recurrence}). Such a simple representation is, however, not possible on non-abelian groups, where the geometry of the Cayley graph enters non-trivially, interacting with step counts in a more intricate manner. It is necessary to decouple the urn process governing the step counts from the geometry of the Cayley graph in order to make progress.

The rest of the paper is organised as follows. In Section~\ref{sec:model}, we describe the model and state our main results. In Section~\ref{sec:prelim}, we set up some preliminaries and prove a crucial concentration estimate. Finally, Section~\ref{sec:proofs} collects the proofs of our main results. Section~\ref{sec:conc} contains some concluding remarks.

\section{The model and our main results}\label{sec:model}
\subsection{Random walk on finitely generated groups}
We first briefly recall the definition of random walks on (discrete) groups.

Let $\mu$ be a probability measure on a discrete group $\Gamma$. The (left-) $\mu$-random walk on $\Gamma$ is the (Markovian) stochastic process $(g_n \cdots g_1)_{n \ge 1}$, where the ``increments'' $g_i$ are i.i.d. random elements distributed as $\mu$.

Let $\Gamma$ be a finitely generated group, with a symmetric\footnote{$\cS$ being symmetric entails that $s \in \cS$ if and only if $s^{-1} \in \cS$.} generating set $\cS$. Recall that the (left-) Cayley graph $\Cay(\Gamma; \cS)$ of $\Gamma$ with respect to $\cS$ is a $|\cS|$-regular graph with vertex set $\Gamma$ and an edge between vertices $x, y \in \Gamma$ if and only if $yx^{-1} \in \cS$. If $\mu$ is the uniform distribution on $\cS$, then the $\mu$-random walk on $\Gamma$ is the simple random walk on $\Cay(\Gamma; \cS)$.

For more background on random walks on groups, we refer the reader to the books \cite{woess2000random, lyons2017probability, lalley2023random, yadin2024harmonic}.

\subsection{The ERW on finitely generated groups}
We now introduce a generalisation of the ERW to any finitely generated group $\Gamma$, with a symmetric generating set $\cS$ with $|\cS| = d$. We shall think of the elements of $\cS$ as the \emph{steps} of the walk. The elephant random walk on $\Cay(\Gamma; \cS)$ is a non-Markovian stochastic process $(w_n)_{n \ge 0}$ defined as follows: We start the walk at the identity, i.e. $w_0 = e$. We take the first step $g_1$ to be a uniformly random step from $\cS$. For $n \ge 1$, given past steps $g_1, \ldots, g_n$, we first choose a uniform random number $D \sim \Unif([n])$, where $[n] := \{1, \ldots, n\}$, and then set 
\begin{equation}\label{eq:elephant-step}
    g_{n + 1} = \begin{cases}
        g_D & \text{with probability $p$}, \\
        a \in \cS \setminus \{g_D\} & \text{with probability $\frac{1 - p}{d - 1}$ each}.
    \end{cases}
\end{equation}
Let $w_n$ denote the position of the walk at time $n$, which is encoded by the \emph{reduction} of the element $g_n \cdots g_1$ as per the group laws. Let $\Delta_n = \rho(w_n, e)$ denote the distance from the root $e$ after time $n$, where $\rho$ denotes the word-metric, giving the length of the reduced word.

We note that for the special value $p = 1/d$, the ERW is the same as the SRW on $\Cay(\Gamma; \cS)$.

\begin{remark}
It is clear that the above model generalises the ERW on $\bbZ^d$. Indeed the $d$-dimensional lattice is the Cayley graph of the abelian group $\bbZ^d$ with respect to the generating set $\cS = \{\pm \be_1, \ldots, \pm \be_d\}$, where the $\be_i$'s are the canonical basis vectors of $\bbZ^d$.
\end{remark}

\subsection*{An alternative description as a step-reinforced random walk} 
The ERW is a special case of the so-called step-reinforced random walks (SRRW). \cite{kim2014anomalous} studied an SSRW on $\bbZ$ which is equivalent to the ERW (this was pointed out by \cite{kursten2015comment};  see also \cite{kursten2016random}). Since then SSRWs with more general step distributions have been studied by various authors, including \cite{businger2018shark, bertoin2020universality, bertoin2020noise, bertoin2021scaling, bertenghi2021asymptotic, bertenghi2022joint, hu2024strong, qin2025recurrence_step}. One may readily generalise the positively step reinforced random walk (pSSRW) to groups as follows: Given a probability measure $\mu$ on a group $\Gamma$, we let $g_1 \sim \mu$.  Given steps $g_1, 
\ldots, g_n$, at time $n + 1$, we
\begin{itemize}
    \item sample $D \sim \Unif([n])$;
    \item with probability $\tilde{p}$, set $g_{n + 1} = g_D$;
    \item with probability $1 - \tilde{p}$, set $g_{n + 1}$ to be a independent random element distributed as $\mu$.
\end{itemize}
The process $(g_n \cdots g_1)_{n \ge 1}$ obtained in this way is the pSSRW on $\Gamma$ with reinforcement parameter $\tilde{p}$ and step distribution $\mu$.

Given a finitely generated group $\Gamma$ with generating set $\cS$, let $\mu$ be the uniform distribution on $\cS$. The resulting pSSRW is a nearest-neighbour and symmetric random walk on $\Cay(\Gamma; \cS)$ with reinforcement parameter $\tilde{p}$. Notice that with $\tilde{p} \in [0, 1)$, this walk is equivalent to the ERW on $\Cay(\Gamma; \cS)$ with memory parameter $p = \tilde{p} + \frac{1 - \tilde{p}}{d} \in [\frac{1}{d}, 1)$. This gives an alternative construction of the ERW on $\Cay(\Gamma; \cS)$ for $p \in [1/d, 1)$.

To cover the complementary regime, we shall generalise the \emph{counterbalanced random walks} of Bertoin \cite{bertoin2023counterbalancing}, also called negatively step-reinforced random walks (nSSRW). For generalising Bertoin's counterbalancing mechanism to a group setting, we need the concept of the ``opposite'' of a recalled step. Taking group inverses is not an option here for the simple reason that an element may be its own inverse. We shall therefore confine ourselves to finitely generated groups. Also, to keep things simple, we shall assume that the step distribution is uniform over the generating set. With these in mind, let $\Gamma$ be a finitely generated group with generating set $\cS$. A (symmetric) negatively step-reinforced random walk (nSSRW) on $\Cay(\Gamma; \cS)$, with reinforcement parameter $\tilde{p}$, adds a counterbalancing mechanism by \emph{not walking} in the direction $g_D$ with probability $\tilde{p}$. Thus beginning with an uniform random step $g_1$, and given $g_1, \ldots, g_n$, at time $n + 1$, we
\begin{itemize}
    \item sample $D \sim \Unif([n])$;
    \item with probability $\tilde{p}$, set $g_{n + 1}$ to be a uniformly random step from $\cS \setminus \{g_D\}$;
    \item with probability $1 - \tilde{p}$, set $g_{n + 1}$ to be a uniformly random step.
\end{itemize}
The process $(g_n \cdots g_1)_{n \ge 1}$ obtained in this way is the (symmetric) nSSRW on $\Cay(\Gamma; \cS)$ with reinforcement parameter $\tilde{p}$. It is not difficult to check that with $\tilde{p} \in (0, 1]$, this process is equivalent to the ERW on $\Cay(\Gamma; \cS)$ with memory parameter $p = \frac{1 - \tilde{p}}{d} \in [0, \frac{1}{d})$.

\subsection{ERW on Cayley trees}
\begin{figure}[!t]
    \centering
    \begin{tabular}{cc}
        \includegraphics[width=0.5\textwidth]{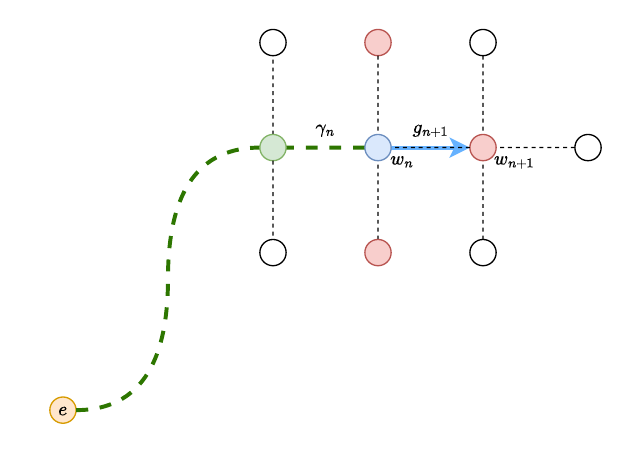} & 
        \includegraphics[width=0.5\textwidth]{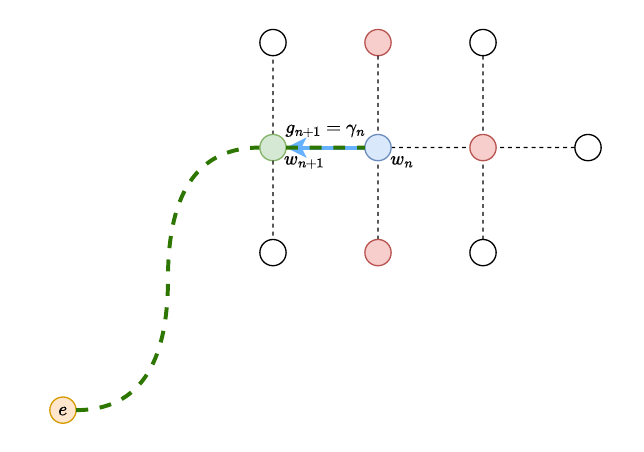} \\
        (a) & (b) 
    \end{tabular}
    \caption{ERW on $\bbT_4$: At $w_{n} \ne e$, one either (a) moves away from the root, i.e. $g_{n + 1} \ne \gamma_n$, so that $\Delta_{n + 1} - \Delta_{n} = 1$, or (b) moves towards it, i.e. $g_{n + 1} = \gamma_n$, so that $\Delta_{n + 1} - \Delta_{n} = -1$. The green dotted line depicts the unique geodesic connecting $e$ and $w_n$.}
    \label{fig:schematic}
\end{figure}

As stated in the introduction, the goal of this paper is to study ERW on finitely generated \emph{non-abelian} infinite groups, by studying perhaps the simplest case where the Cayley graph is the Bethe lattice, i.e. the infinite $d$-regular tree $\bbT_d$, with $d \ge 3$. Writing $\bbZ \cong \langle a, a^{-1} \rangle$ and $\bbZ_2 \cong \langle b \mid b^2 = e\rangle$, one may realise $\bbT_d$ as a Cayley graph by considering the free product $\bbZ^{*d_1} * \bbZ_2^{*d_2}$, with $d = 2d_1 + d_2$. Indeed, $\Gamma = \bbZ^{*d_1} * \bbZ_2^{*d_2}$ is generated by $\cS = \{a_1, a_1^{-1}, \ldots, a_{d_1}, a_{d_1}^{-1}\} \cup \{b_1, \ldots, b_{d_2}\}$, where the only non-trivial relations satisfied by these generators are $b_j^2 = e$ for $j = 1, \ldots, d_2$.

Fix a finitely generated group $\Gamma$, with generating set $\cS$, whose Cayley graph is $\bbT_d$. The identity element $e$ of the group $\Gamma$ is to be thought of as the root of our tree. We remark here that the walk is dependent on the group and its generators (since we use the generators as steps). For instance, on $\bbT_4$, the walks encoded by the group $\bbZ * \bbZ$ will be different from the walk encoded by $\bbZ_2^{* 4}$. In the former, if $a$ is a generator, the sample path $(a^n)_{n \ge 0}$ will move away from the root, whereas in the latter it will alternate between $a$ and $e$.

This is a good place to introduce some objects that will be key in our subsequent analysis. Suppose $w_n \ne e$. As we are walking on a tree, there is a unique geodesic joining $e$ and $w_n$. We denote by $\gamma_n \in \cS$ the unique next step that would decrease the distance by $1$. Thus if $g_{n + 1} = \gamma_n$, one would have $\Delta_{n + 1} = \Delta_n - 1$, and otherwise, the distance would go up: $\Delta_{n + 1} = \Delta_n + 1$. See Figure~\ref{fig:schematic} for an illustration.

For $a \in \cS$, let $N_n(a) := \#\{1\le k \le n : g_k = a\}$ denote the number of $a$-steps taken by time $n$. Define
\begin{equation}\label{eq:ell_n}
    \ell_n := \begin{cases}
        N_n(\gamma_n) & \text{if } w_n \ne e, \\
                    0 & \text{otherwise.}
    \end{cases}
\end{equation}
We shall use the convention that $\frac{\ell_0}{0} = 0$. We also define 
\begin{equation}\label{eq:Xi}
  \Xi_n := \frac{1}{n}\sum_{k = 0}^{n - 1} \bigg(\frac{\ell_k}{k} - \frac{\ind(\Delta_k > 0)}{d}\bigg),
\end{equation}
a quantity whose concentration properties will be fundamental to our analysis.

\subsection{Asymptotic notation.} We shall use the notations $A_n \lesssim B_n$ or $B_n \gtrsim A_n$ to mean that there is some constant $C > 0$, not depending on $n$, such that $A_n \le C B_n$ for all $n \ge n_0$, for some $n_0 \ge 1$.  The constant $C$ will typically depend on various parameters such as $d, p, m$, etc., which will be clear from the context. The notation $A_n \asymp B_n$ shall mean that both $A_n \lesssim B_n$ and $B_n \lesssim A_n$ hold true. For random variables, we use the standard $O_P$ and $o_P$ notations.

\subsection{Main results}
Our first main result shows that the asymptotic speed/escape rate of the ERW on a Cayley tree does not depend on the memory parameter $p$.
\begin{theorem}[Escape rate]\label{thm:LLN}
    We have for any $p \in [0, 1)$ that
    \[
        \lim_{n \to \infty} \frac{\Delta_n}{n} = \frac{d - 2}{d} \quad \text{a.s.}
    \]
\end{theorem}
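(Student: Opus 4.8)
The plan is to reduce the almost-sure limit to the behaviour of three explicit quantities via a Doob decomposition of $\Delta_n$, and then to isolate the concentration of $\Xi_n$ as the single nontrivial analytic input.

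First I would compute the one-step conditional drift. Writing $\cF_n = \sigma(g_1,\dots,g_n)$ and $q := \frac{1-p}{d-1}$, the recall rule \eqref{eq:elephant-step} gives, for any $a \in \cS$, that $\bbP(g_{n+1} = a \mid \cF_n) = p\,\frac{N_n(a)}{n} + q\,\frac{n - N_n(a)}{n}$. Since on a tree each step changes the distance by exactly $\pm 1$, and a move decreases $\Delta$ precisely when $g_{n+1} = \gamma_n$, on the event $\{\Delta_n > 0\}$ we get $\bbE[\Delta_{n+1}-\Delta_n \mid \cF_n] = 1 - 2\big(q + (p-q)\tfrac{\ell_n}{n}\big)$, while on $\{\Delta_n = 0\}$ the increment is deterministically $+1$. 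A direct simplification, using $p - q = \frac{pd-1}{d-1}$ together with the identity $1 - 2q - \frac{2(p-q)}{d} = \frac{d-2}{d}$, yields the clean form
\[
\bbE[\Delta_{n+1}-\Delta_n \mid \cF_n] - \frac{d-2}{d} = -\frac{2(pd-1)}{d-1}\Big(\frac{\ell_n}{n} - \frac{\ind(\Delta_n>0)}{d}\Big) + \frac{2}{d}\,\ind(\Delta_n = 0).
\]

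Next I would sum this identity over $k = 0,\dots,n-1$ and divide by $n$. Letting $\mathcal{M}_n := \sum_{k=0}^{n-1}\big(\Delta_{k+1}-\Delta_k - \bbE[\Delta_{k+1}-\Delta_k\mid\cF_k]\big)$ be the associated martingale and $L_n := \#\{0 \le k < n : \Delta_k = 0\}$ the number of visits to the root, and recognising the Ces\`aro average of the first right-hand term as exactly $\Xi_n$, I obtain the master decomposition
\[
\frac{\Delta_n}{n} = \frac{d-2}{d} + \frac{\mathcal{M}_n}{n} - \frac{2(pd-1)}{d-1}\,\Xi_n + \frac{2}{d}\cdot\frac{L_n}{n}.
\]
The theorem then follows from three facts: $\mathcal{M}_n/n \to 0$, $\Xi_n \to 0$, and $L_n/n \to 0$ almost surely. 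The martingale term is routine: the increments $\Delta_{k+1}-\Delta_k$ are bounded by $1$, so $\mathcal{M}_n$ has bounded differences and $\mathcal{M}_n/n \to 0$ a.s. by the martingale strong law.

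The crux is $\Xi_n \to 0$, and this is where the concentration estimate of Section~\ref{sec:prelim} enters. The key structural observation is that the step sequence $(g_k)$ is \emph{autonomous}: the rule \eqref{eq:elephant-step} never references the walker's position, so the counts $(N_k(a))_{a\in\cS}$ evolve as a symmetric Friedman-type reinforced urn whose unique fixed point is the uniform vector, giving $N_k(a)/k \to 1/d$ a.s. for every $a \in \cS$ simultaneously. Since $\gamma_k \in \cS$, we then have the geometry-free bound $\big|\frac{\ell_k}{k} - \frac{\ind(\Delta_k>0)}{d}\big| \le \max_{a\in\cS}\big|\frac{N_k(a)}{k} - \frac{1}{d}\big| \to 0$ a.s., whence $\Xi_n \to 0$ a.s. by Ces\`aro summation. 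The sharper quantitative concentration of $\Xi_n$ is what will later drive the rate-of-convergence results, but for the law of large numbers this soft statement suffices. I expect this decoupling of the urn from the tree geometry---controlling $\ell_k = N_k(\gamma_k)$ for the adapted, position-dependent target $\gamma_k$---to be the main obstacle, exactly the difficulty flagged in the introduction.

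Finally, $L_n/n \to 0$ I would obtain by a bootstrap rather than a direct estimate. Since $\frac{2}{d}\frac{L_n}{n} \ge 0$ and the remaining non-constant terms already vanish, the master decomposition forces $\liminf_n \Delta_n/n \ge \frac{d-2}{d} > 0$ for $d \ge 3$; hence $\Delta_n \to \infty$ a.s., so that $\Delta_k > 0$ for all sufficiently large $k$, making $L_n$ eventually constant and therefore $L_n/n \to 0$. Substituting this back into the master decomposition collapses every error term and yields $\Delta_n/n \to \frac{d-2}{d}$ almost surely.
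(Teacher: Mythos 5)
Your proposal is correct and follows essentially the same route as the paper: the same Doob decomposition with the drift $G_n$, the same martingale SLLN step, the same urn-based bound $\big|\frac{\ell_k}{k} - \frac{\ind(\Delta_k>0)}{d}\big| \le \max_{a\in\cS}\big|\frac{N_k(a)}{k} - \frac{1}{d}\big|$ to control $\Xi_n$, and the same bootstrap ($\liminf_n \Delta_n/n \ge \frac{d-2}{d} > 0$ forces finitely many returns to the root, hence $L_n/n \to 0$). Your ``master decomposition'' is exactly the paper's identity \eqref{eq:speed_decomp}, so the argument differs only in presentation, not in substance.
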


\begin{remark}\label{rem:p=1}
    If $p = 1$, then the walker always repeats the first step, and hence $\frac{\Delta_n}{n} = 1$ when $\Gamma$ is the free group $\bbZ^{*d}$ with $2d$ generators. On the other hand, for $\Gamma = \bbZ_2^{*d}$, we have $\frac{\Delta}{n} \le \frac{1}{n}$. In general, a deterministic limit may not exist (e.g., on $\bbZ^{* d_1} * \bbZ_2^{* d_2}$, with $d_1, d_2 \ge 1$, where it is easy to see that $\frac{\Delta_n}{n} \convas \Ber(\frac{2d_1}{2d_1 + d_2})$; see Figure~\ref{fig:plots}-(b) for an empirical demonstration of this phenomenon on $\bbZ * \bbZ_2^{*2}$).
\end{remark}

Let $p_d = \frac{d + 1}{2d}$. We shall refer to the regimes $p < p_d$, $p = p_d$, and $p > p_d$ as the \emph{subcritical}, \emph{critical} and \emph{supercritical} regimes, respectively. We remark that $p_{2d}$ is also the critical probability of the MERW on $\bbZ^d$.

Let
\[
    r_n \equiv r_n(p, d) := \begin{cases}
        n^{1/2} & \text{if } p < p_d, \\
        (\frac{n}{\log n})^{1/2} & \text{if } p = p_d, \\
        n^{\frac{d(1 - p)}{d - 1}} & \text{if } p > p_d.
    \end{cases}
\]
Note that $\frac{d(1 - p)}{d - 1} < 1/2$ for $p > p_d$. We then have the following results on the rate of convergence to the asymptotic speed.
\begin{theorem}[Rate of convergence to the escape rate]\label{thm:rate}
    Let $p \in [0, 1)$. Then for any $m \in [1, 2]$,
    \begin{equation}\label{eq:upper-bd}
        \bbE \bigg|\frac{\Delta_n}{n} - \frac{d - 2}{d}\bigg|^m \lesssim r_n^{-m}.
    \end{equation}
    In the supercritical regime, \eqref{eq:upper-bd} holds for a larger range of $m$, namely, for all $m \in [1, \frac{d - 1}{d(1 - p)}]$.

    In fact, in the subcritical regime $p < p_d$, in a neighbourhood of $1/d$, we can show a matching lower bound: For any $m \in [1, 2)$, there is $\epsilon > 0$ such that for all $p \in (1/d - \epsilon, 1/d + \epsilon)$, one has
    \begin{equation}\label{eq:lower-bd}
        \bbE \bigg|\frac{\Delta_n}{n} - \frac{d - 2}{d}\bigg|^m \gtrsim n^{-m/2}.
    \end{equation}
\end{theorem}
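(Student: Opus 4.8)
The plan is to derive a single exact decomposition of $\Delta_n$ in which $\Xi_n$ appears explicitly, and then to control the resulting three terms by (i) concentration of an \emph{autonomous} P\'olya urn, (ii) a martingale moment inequality, and (iii) transience.

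Set $\kappa := \frac{pd - 1}{d - 1}$. Conditioning on $\cF_n$ when $\Delta_n > 0$, the walker steps towards the root, i.e. $g_{n+1} = \gamma_n$, with probability $q_n = \frac{1 - p}{d - 1} + \kappa \frac{\ell_n}{n}$, so the signed increment $\Delta_{n+1} - \Delta_n \in \{\pm 1\}$ has conditional mean $1 - 2 q_n$. A short computation, together with the forced outward step at the root, gives
\[
    \bbE[\Delta_{n+1} - \Delta_n \mid \cF_n] = \frac{d - 2}{d} - 2\kappa\bigg(\frac{\ell_n}{n} - \frac{\ind(\Delta_n > 0)}{d}\bigg) + \frac{2}{d}\ind(\Delta_n = 0).
\]
Summing over $k = 0, \ldots, n - 1$ and dividing by $n$ produces the exact identity
\[
    \frac{\Delta_n}{n} - \frac{d - 2}{d} = -2\kappa\, \Xi_n + \frac{2}{d}\cdot\frac{R_n}{n} + \frac{M_n}{n},
\]
where $R_n := \sum_{k = 0}^{n - 1} \ind(\Delta_k = 0)$ counts returns to the root and $M_n$ is the martingale obtained by compensating the increments of $\Delta$. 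The observation that drives everything is that the colour counts $(N_n(a))_{a \in \cS}$ evolve \emph{autonomously}: since $D \sim \Unif([n])$, the conditional law of $g_{n+1}$ depends only on the counts and not on the location of the walker, so $(N_n(a))_a$ is a $d$-colour linearly reinforced P\'olya urn with symmetric fixed point $1/d$ and fluctuation eigenvalue $\kappa$. This is precisely the decoupling of the urn from the geometry; the tree enters $\Xi_n$ only through the identity of the moving target $\gamma_k$.

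For the upper bound I would estimate the three terms in $L^m$. The essential input is the concentration estimate of Section~\ref{sec:prelim}, applied to the autonomous urn \emph{uniformly over colours}: $\big\|\max_{a \in \cS} |N_k(a)/k - 1/d|\big\|_m \lesssim r_k^{-1}$, the three cases of $r_k$ being exactly the subcritical, critical and supercritical fluctuation rates of a linear urn, whose phase transition sits at $\kappa = 1/2$, i.e. $p = p_d$. Since $|\ell_k/k - \ind(\Delta_k > 0)/d| \le \max_{a} |N_k(a)/k - 1/d|$ pointwise — this is where the moving target $\gamma_k$ is absorbed — Minkowski's inequality and a Ces\`aro estimate give $\|\Xi_n\|_m \le \frac{1}{n}\sum_{k < n} \big\|\max_a |N_k(a)/k - 1/d|\big\|_m \lesssim \frac{1}{n}\sum_{k < n} r_k^{-1} \asymp r_n^{-1}$ in all three regimes. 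The martingale $M_n$ has increments bounded by $2$, so Burkholder--Davis--Gundy gives $\|M_n\|_m \lesssim \big\|\langle M\rangle_n^{1/2}\big\|_m \lesssim n^{1/2}$ and hence $\|M_n/n\|_m \lesssim n^{-1/2} \le r_n^{-1}$. Finally, Theorem~\ref{thm:LLN} furnishes a strictly positive speed and hence transience, and the accompanying return-probability estimates make returns rare enough that $\|R_n/n\|_m$ is negligible against $r_n^{-1}$. Adding the three bounds yields \eqref{eq:upper-bd} for $m \in [1, 2)$. The $m = 2$ case follows by the same argument; depending on the precise form in which the second-moment concentration estimate is stated, one obtains either the clean $r_n^{-2}$ or, in the sub- and critical regimes, the stated bound with the extra (likely removable) $\log n$, while in the supercritical regime the governing series $\sum_k k^{-2\kappa}$ converges and no loss occurs.

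For the lower bound \eqref{eq:lower-bd} I would use that $\kappa$ vanishes exactly at $p = 1/d$, so on a neighbourhood of $1/d$ the $\Xi_n$ term carries the small prefactor $2|\kappa|$ and the fluctuations of $\Delta_n/n$ are driven by $M_n/n$. The conditional increment variance equals $4 q_k(1 - q_k)\ind(\Delta_k > 0)$; since $q_k$ stays bounded away from $0$ and $1$ and transience forces $\sum_{k < n}\ind(\Delta_k > 0) = n - O_P(1)$, one gets $\bbE[M_n^2] = \bbE\langle M\rangle_n \gtrsim n$, while Burkholder--Davis--Gundy gives $\bbE[M_n^4] \lesssim \bbE[\langle M\rangle_n^2] \lesssim n^2$. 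A Paley--Zygmund argument then yields $\bbP(|M_n| \ge c\sqrt{n}) \gtrsim 1$ and hence $\bbE|M_n/n|^m \gtrsim n^{-m/2}$, uniformly on a small neighbourhood of $1/d$. Shrinking $\epsilon$ so that $2|\kappa|$ is small there, the reverse triangle inequality $\big\|{-2\kappa\, \Xi_n + M_n/n}\big\|_m \ge \|M_n/n\|_m - 2|\kappa|\,\|\Xi_n\|_m \gtrsim n^{-1/2}$ (with $R_n/n$ negligible) completes the proof. The main obstacle throughout is the moving target $\gamma_k$ inside $\ell_k = N_k(\gamma_k)$: because $\gamma_k$ is dictated by the geometry of the walk it cannot be treated as a fixed colour, and the argument succeeds only by replacing it with the uniform-in-colour fluctuation $\max_a |N_k(a)/k - 1/d|$; securing this uniform concentration with the sharp regime-dependent rate $r_k^{-1}$, together with verifying that the count process is genuinely autonomous, is the technical heart of the matter, the remaining steps being standard martingale bookkeeping.
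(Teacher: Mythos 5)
Your proposal is correct, and its skeleton is the same as the paper's: the exact identity you derive is precisely \eqref{eq:speed_decomp} (your $-2\kappa$ equals $\frac{2(1-pd)}{d-1}$), and the three terms are then handled with the same inputs --- the urn moment bounds of Lemma~\ref{lem:urn-moments-of-proportions}, the Ces\`aro estimates of Lemma~\ref{lem:cesaro_ests}, the return-probability estimates of Theorem~\ref{thm:return_to_zero}, and Burkholder--Davis--Gundy for $M_n$ --- while your lower bound rests on the same observation as the paper's, namely that the coefficient of $\Xi_n$ vanishes at $p = 1/d$, so that the martingale term dominates once $\epsilon$ is small. Two local choices differ, and one of them buys something. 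For $\Xi_n$, the paper (Lemma~\ref{lem:cesaro_ell}) applies Jensen, $|\frac{1}{n}\sum_k x_k|^m \le \frac{1}{n}\sum_k |x_k|^m$, and then averages $r_k^{-m}$; at $m = 2$ in the subcritical/critical regimes this hits the $\beta = 1$ case of Lemma~\ref{lem:cesaro_ests} and produces the extra $\log n$. You instead use Minkowski's inequality in $L^m$, $\|\Xi_n\|_m \le \frac{1}{n}\sum_k \|\max_a |N_k(a)/k - 1/d|\|_m \lesssim \frac{1}{n}\sum_k r_k^{-1} \asymp r_n^{-1}$, which is valid (all three regimes have Ces\`aro exponent $\beta < 1$) and strictly sharper: it removes the logarithm from the $\Xi_n$ term for every $m \ge 1$. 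Consequently, the stated $\log n$ at $m = 2$ survives in your argument only through the return-probability term when $p \ge 1/2$, where $\frac{1}{n}\sum_k \bbP(\Delta_k = 0) \lesssim \frac{\log n}{n}$; for subcritical $p < 1/2$ and at criticality your route actually proves a slightly stronger bound than the one stated in the theorem. For the lower bound on $\bbE|M_n|^m$, you use Paley--Zygmund with $\bbE M_n^2 \gtrsim n$ (which needs, as you note, $q_k$ bounded away from $0$ and $1$ near $p = 1/d$ plus the exponential return estimates, both available) and $\bbE M_n^4 \lesssim n^2$, whereas the paper combines the BDG lower bound with the a.s.\ and $L^{m/2}$ convergence of $\frac{1}{n}\sum_k Y_k^2$ to $\frac{4(d-1)}{d^2}$; both are sound, and both require shrinking $\epsilon$ so that $2|\kappa|$ times the ($p$-uniform on a compact neighbourhood of $1/d$) constant in $\|\Xi_n\|_m \lesssim n^{-1/2}$ is beaten by the martingale constant, which is exactly the caveat the paper records at the end of its proof.
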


\begin{figure}[!t]
\centering
\begin{tabular}{ccc}
    \includegraphics[width = 0.315\textwidth]{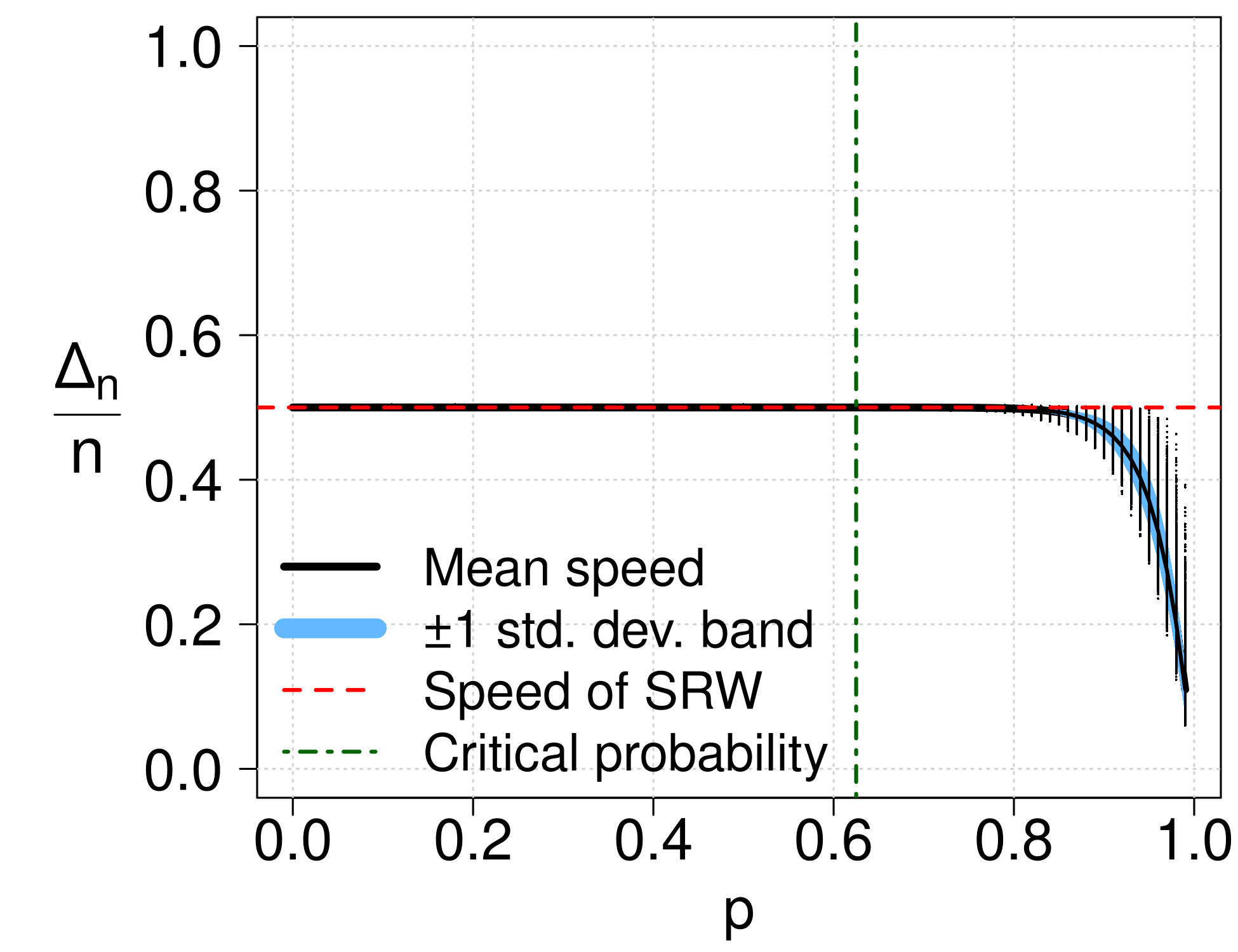} &
    \includegraphics[width = 0.315\textwidth]{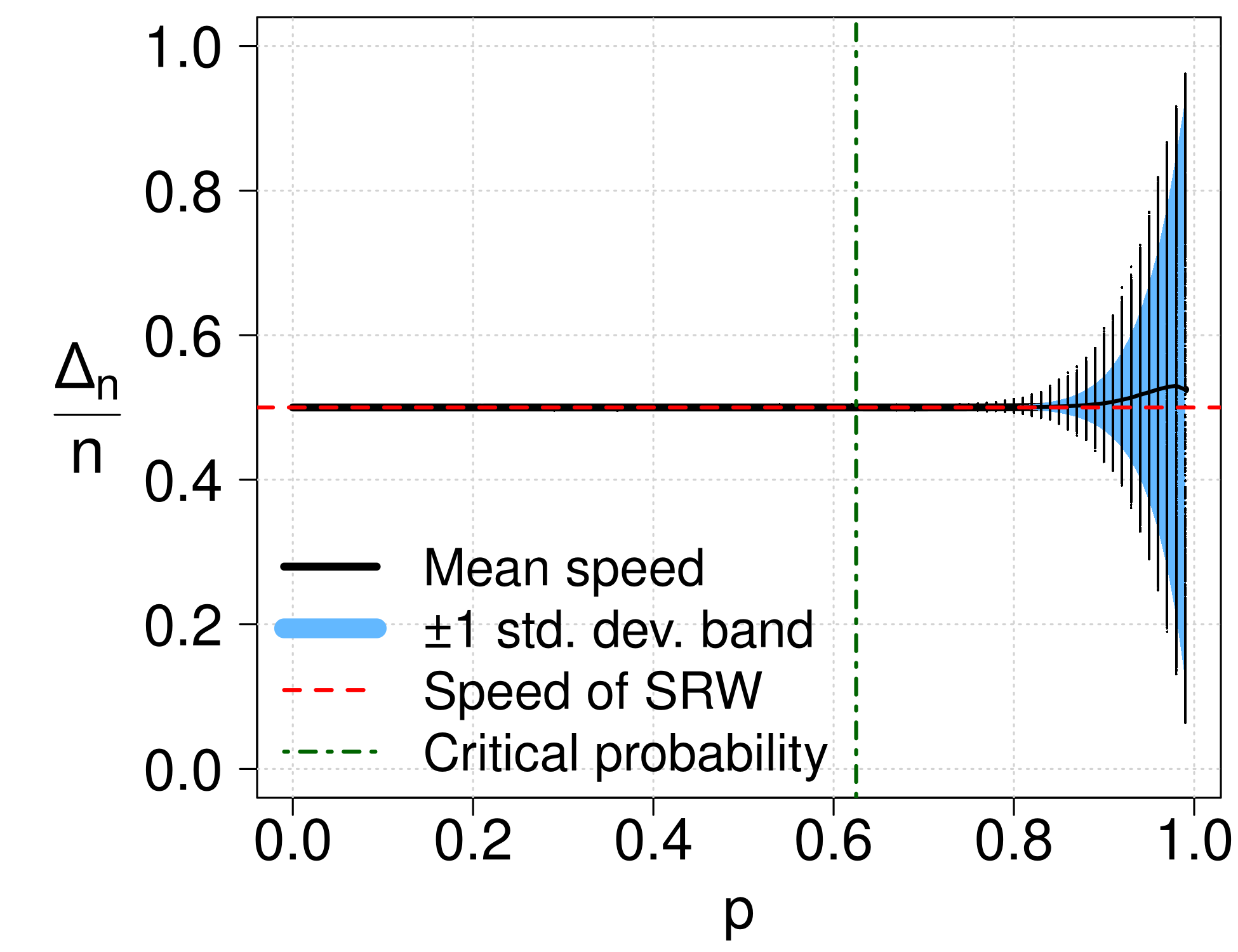} &
    \includegraphics[width = 0.315\textwidth]{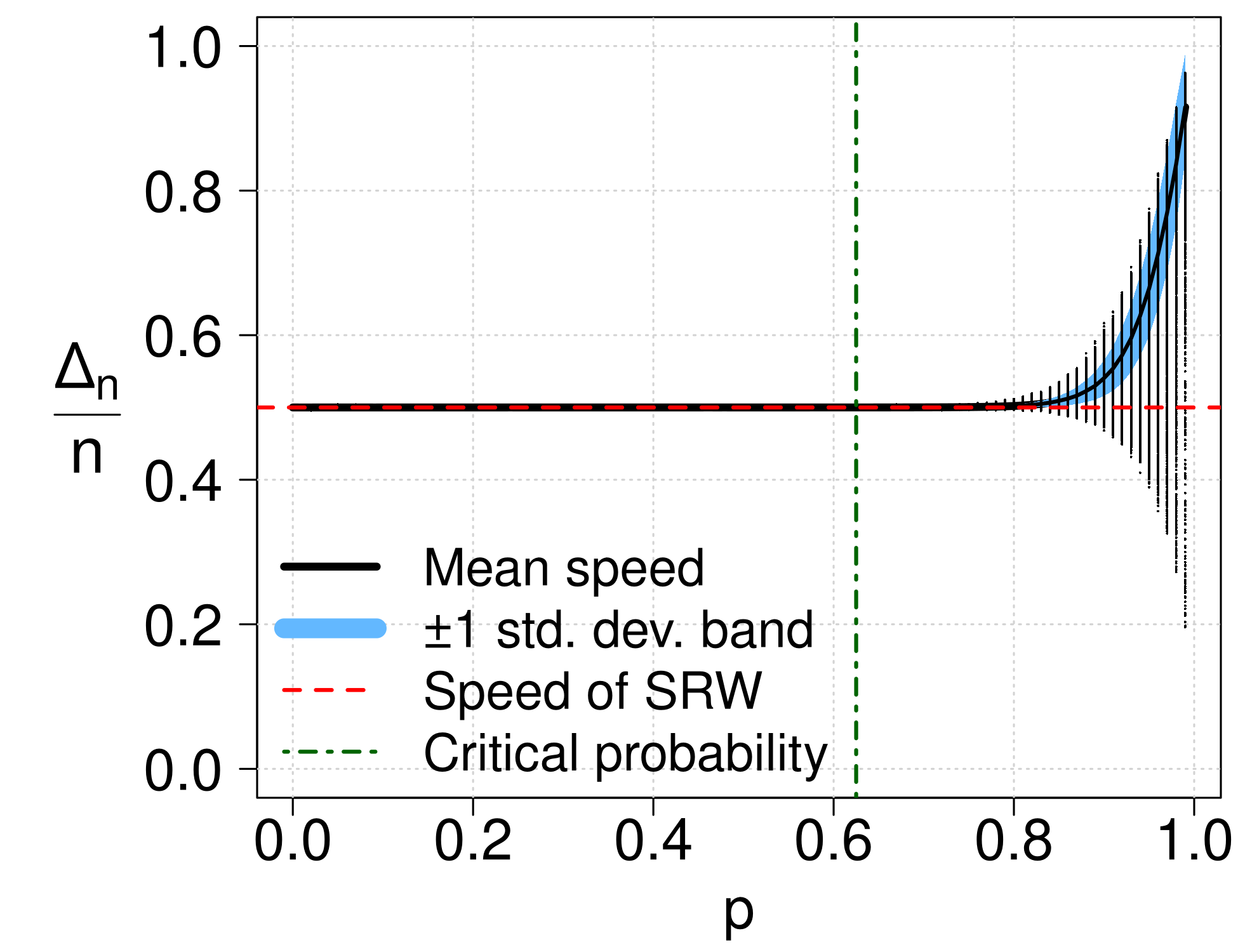} \\
    \hspace{2.25em}(a) & \hspace{2.25em}(b) & \hspace{2.25em}(c)
\end{tabular}
\caption{Escape rate of the ERW on the Cayley graphs of (a) $\bbZ_2^{*4}$, (b) $\bbZ * \bbZ_2^{*2}$, (c) $\bbZ^{*2}$, based on $10^4$ Monte Carlo runs with $n = 10^6$ steps each; the red horizontal line corresponds to $\frac{d - 2}{d} = \frac{1}{2}$, the escape rate of SRW. The observed deviation, near $p = 1$, of the mean speed from the theoretical value of $\frac{1}{2}$ reflects the increasingly slower rates of convergence.}
\label{fig:plots}
\end{figure}

\begin{corollary}
    $r_n \big(\frac{\Delta_n}{n} - \frac{d - 2}{d}\big)$ is a tight sequence.
\end{corollary}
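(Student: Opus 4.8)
The plan is to read tightness off directly from the moment bound established in Theorem~\ref{thm:rate}. Write $Y_n := r_n\big(\frac{\Delta_n}{n} - \frac{d-2}{d}\big)$. Fixing any single exponent $m \in [1, 2)$ suffices, so I would simply take, say, $m = 1$. The key observation is that the scaling factor $r_n$ defining $Y_n$ is exactly the reciprocal of the rate appearing on the right-hand side of \eqref{eq:upper-bd}. Consequently,
\[
    \bbE|Y_n|^m = r_n^m \, \bbE\bigg|\frac{\Delta_n}{n} - \frac{d-2}{d}\bigg|^m \lesssim r_n^m \cdot r_n^{-m} = O(1),
\]
so that $C := \sup_{n \ge 1} \bbE|Y_n|^m < \infty$.

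With a uniformly bounded $m$-th absolute moment in hand, tightness follows from a one-line application of Markov's inequality. For any $M > 0$,
\[
    \sup_{n \ge 1} \bbP(|Y_n| > M) \le \sup_{n \ge 1} \frac{\bbE|Y_n|^m}{M^m} \le \frac{C}{M^m},
\]
and the right-hand side tends to $0$ as $M \to \infty$. Hence for every $\epsilon > 0$ one may choose $M$ large enough that $\sup_{n} \bbP(|Y_n| > M) < \epsilon$, which is precisely the definition of tightness.

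There is essentially no obstacle here: the corollary is a soft consequence of the quantitative rate bound, and the only input needed is that the upper bound in \eqref{eq:upper-bd} holds for some fixed $m \ge 1$ with the matching power of $r_n$. I would not require the sharper $m = 2$ statement (with its extra $\log n$ factor) nor the lower bound \eqref{eq:lower-bd}; the latter would only enter if one additionally wished to argue that the rescaled sequence is nondegenerate, i.e. that it does not collapse to $0$ in probability, but that is not part of the tightness assertion.
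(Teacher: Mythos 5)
Your proof is correct and is essentially the argument the paper intends: the corollary is an immediate consequence of the moment bound \eqref{eq:upper-bd} of Theorem~\ref{thm:rate} (for any fixed $m \in [1,2)$, e.g. $m=1$) combined with Markov's inequality, which is exactly what you did. Your closing remark is also accurate — the lower bound \eqref{eq:lower-bd} plays no role in tightness and would only matter for ruling out degeneracy of the rescaled sequence.
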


The simulations presented in Figures~\ref{fig:plots} and \ref{fig:hist} empirically support the conclusions of Theorems~\ref{thm:LLN} and \ref{thm:rate}. While the upper bounds of Theorem~\ref{thm:rate} do not imply that the rate of convergence to the asymptotic speed is $r_n^{-1}$, our simulations suggest that this is the case.

\begin{conjecture}
Is it true that for all $m \ge 1$,
\[
    \bbE \bigg|\frac{\Delta_n}{n} - \frac{d - 2}{d}\bigg|^m \asymp r_n^{-m}?
\]
\end{conjecture}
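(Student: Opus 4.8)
The plan is to reduce everything to the analysis of two driving terms through an exact Doob decomposition, and then to establish matching two-sided moment bounds for each. Writing $(\cF_n)_{n \ge 0}$ for the natural filtration and recalling that $\bbP(g_{n+1} = \gamma_n \mid \cF_n) = p\,\frac{\ell_n}{n} + \frac{1-p}{d-1}\big(1 - \frac{\ell_n}{n}\big)$ on $\{\Delta_n > 0\}$, a short computation linearising $\bbE[\Delta_{n+1} - \Delta_n \mid \cF_n]$ around the equilibrium proportion $1/d$ yields
\[
    \frac{\Delta_n}{n} - \frac{d-2}{d} = \frac{2 Z_n}{d\,n} - 2\kappa\,\Xi_n + \frac{\tilde M_n}{n}, \qquad \kappa := \frac{pd - 1}{d - 1},
\]
where $Z_n$ counts the visits to the root before time $n$ and $\tilde M_n = \sum_{k=0}^{n-1} M_{k+1}$ is the martingale of centred distance increments. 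The coefficient $\kappa$ is exactly the second eigenvalue of the $d \times d$ replacement matrix $R = \frac{1-p}{d-1}(J - I) + pI$ of the P\'olya-type urn tracking the step counts $(N_n(a))_{a \in \cS}$, and the condition $\kappa < \tfrac12$, $\kappa = \tfrac12$, $\kappa > \tfrac12$ is precisely $p < p_d$, $p = p_d$, $p > p_d$, which is what produces the three regimes of $r_n$ through the classical sub-/critical-/super-critical dichotomy of urn fluctuations (note $1 - \kappa = \frac{d(1-p)}{d-1}$ is the supercritical exponent). Since the walk is transient, $Z_n$ increases to an a.s.-finite limit dominated by a geometric variable, so $\bbE Z_n^m = O(1)$ and the first term is $O(n^{-m})$ in every $L^m$ --- negligible against $r_n^{-m} \ge n^{-m/2}$. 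Thus the conjecture is equivalent to the two-sided bound $\big\| -2\kappa\,\Xi_n + \tilde M_n/n \big\|_m \asymp r_n^{-1}$ for every $m \ge 1$.

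For the upper bound valid for all $m \ge 1$, I would treat the two terms separately. The increments of $\tilde M_n$ are bounded by $2$, so Burkholder--Davis--Gundy gives $\bbE|\tilde M_n|^m \lesssim \bbE \langle \tilde M \rangle_n^{m/2} \lesssim n^{m/2}$, whence $\|\tilde M_n/n\|_m \lesssim n^{-1/2} \le r_n^{-1}$ in every regime. For $\Xi_n$ one needs $\|\Xi_n\|_m \lesssim r_n^{-1}$ for all $m$; the concentration estimate of Section~\ref{sec:prelim} supplies this for $m \in [1,2)$, and the plan is to upgrade it to all $m$ using higher-moment control of the urn. Concretely, the deviations satisfy $\|N_k(a)/k - 1/d\|_m \lesssim k^{-\min(1/2,\,1-\kappa)}$ uniformly in $m$ (sub-Gaussian tails when $\kappa < \tfrac12$; $L^m$-convergence of $k^{1-\kappa}(N_k(a)/k - 1/d)$ to a limit with all moments finite when $\kappa > \tfrac12$), and since $|\ell_k/k - 1/d| \le \max_a |N_k(a)/k - 1/d|$, Minkowski applied to the Cesàro average $\Xi_n$ transfers these bounds with the correct exponent. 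Removing the spurious $\log n$ factors present at $m = 2$ in the sub- and critical regimes is the one genuinely delicate point here: it requires replacing the crude termwise bound by a sharp covariance estimate $\Cov(\ell_j/j - 1/d,\, \ell_k/k - 1/d)$ exhibiting enough decay in $\max(j,k)/\min(j,k)$ that $\Var(\Xi_n) \lesssim n^{-1}$ without a logarithm.

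The lower bound is the substantive part and splits by regime. In the supercritical regime $\kappa > \tfrac12$, the martingale term is $O(n^{-1/2}) = o(r_n^{-1})$, so $\Xi_n$ dominates and it suffices to show $\|\Xi_n\|_m \gtrsim r_n^{-1}$. This I would derive from the supercritical urn limit theorem: $k^{1-\kappa}(N_k(a)/k - 1/d)$ converges a.s. and in $L^m$ to a nondegenerate random vector, and after decoupling the moving target $\gamma_k$ from the urn (Section~\ref{sec:prelim}) one expects $r_n\,\Xi_n \to \Xi_\infty$ with $\Xi_\infty$ genuinely non-zero; combined with the uniform integrability furnished by the upper bound, $\|r_n \Xi_n\|_m \to \|\Xi_\infty\|_m > 0$ gives the claim. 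In the sub- and critical regimes $\kappa \le \tfrac12$, both terms live at the scale $n^{-1/2}$ (up to the critical logarithm) and may interfere, so the natural route is a joint martingale central limit theorem showing $r_n\big(-2\kappa\,\Xi_n + \tilde M_n/n\big) \convd \cN(0, \sigma^2(p,d))$; the lower bound $\gtrsim r_n^{-m}$ then follows for every $m$ from convergence together with uniform integrability, provided $\sigma^2(p,d) > 0$.

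The main obstacle is precisely the non-degeneracy/non-cancellation at the heart of the lower bound: showing $\sigma^2(p,d) > 0$ for every subcritical $p$ (not merely near $1/d$, where $\kappa \to 0$ kills the $\Xi_n$ term and reduces everything to the distance martingale, which is how \eqref{eq:lower-bd} is proved) and showing $\Xi_\infty \not\equiv 0$ in the supercritical regime. Both reduce to verifying that the urn-driven term and the intrinsic geometric fluctuation of the distance do not conspire to cancel --- a statement one fully expects to hold but which seems to require either an explicit (and messy) variance computation carried through the decoupling, or a more structural argument identifying $\sigma^2$ and $\Xi_\infty$ with non-zero functionals of the urn martingale. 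The difficulty is compounded by the moving target $\gamma_k$: because $\ell_k = N_k(\gamma_k)$ selects an urn coordinate according to the current geometry, the limit $\Xi_\infty$ is not simply a fixed coordinate of the urn's limiting vector, and controlling the joint law of the geometry and the urn to the precision needed for a strict positivity statement is what keeps the full two-sided result open.
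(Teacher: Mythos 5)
You should first note what you are proving against: the paper does \emph{not} prove this statement --- it is posed as an open problem, with only partial results in its direction (the upper bounds of Theorem~\ref{thm:rate}, which carry a $\log n$ defect at $m = 2$ and degrade for $m > 2$, and the lower bound \eqref{eq:lower-bd} only for $p$ near $1/d$ and $m \in [1,2)$). Your decomposition is in fact identical to the paper's \eqref{eq:speed_decomp} (your $-2\kappa = \frac{2(1-pd)}{d-1}$, your $Z_n$ is the root local time, your $\tilde M_n$ is $M_n$), so the proposal is a programme built on the paper's own skeleton, and you correctly concede that its substantive half is open. One part of your plan is a genuine improvement worth recording: applying Minkowski to the Ces\`aro average, $\|\Xi_n\|_m \le \frac{1}{n}\sum_{k} \big\| \frac{\ell_k}{k} - \frac{\ind(\Delta_k > 0)}{d}\big\|_m \lesssim \frac{1}{n}\sum_k r_k^{-1} \asymp r_n^{-1}$, needs no new urn input at all --- Lemma~\ref{lem:urn-moments-of-proportions} is already stated for every $m \ge 1$ --- and it strengthens Lemma~\ref{lem:cesaro_ell} (which uses Jensen termwise, at the price of $r_k^{-m}$ inside the sum), removing the spurious logarithm at $m = 2$ and giving $\|\Xi_n\|_m \lesssim r_n^{-1}$ for all $m$; your proposed covariance analysis is therefore unnecessary.

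However, two gaps prevent this from being a proof, and only one of them is the one you flag. First, the claim that $Z_n$ is ``dominated by a geometric variable, so $\bbE Z_n^m = O(1)$'' is unjustified: the walk is non-Markovian, so transience gives no regenerative excursion structure and no geometric tail for the number of returns. Even $\bbE Z_\infty < \infty$ would require $\sum_n \bbP(\Delta_n = 0) < \infty$, whereas for $p \ge 1/2$ (or $(p,d) = (0,3)$) the best available bound is the $O(n^{-1})$ of Theorem~\ref{thm:return_to_zero}, which is not summable; improving it is itself flagged as open in Remark~\ref{rem:conc} and hinges on (stretched) exponential concentration of $\Xi_n$. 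Without this, the root-visit term in \eqref{eq:three_parts_upper} is only controlled at order $\frac{\log n}{n}$ in those regimes, which already exceeds $r_n^{-m} = n^{-m/2}$ for $m > 2$ subcritically --- so even your \emph{upper} bound for all $m \ge 1$ does not go through as written. Second, the non-degeneracy statements ($\sigma^2(p,d) > 0$ for all subcritical $p$; $\Xi_\infty \not\equiv 0$ supercritically) are exactly what the paper leaves open after Theorem~\ref{thm:fluc}, and your honest assessment of the obstruction (the moving target $\gamma_k$ coupling the urn to the geometry) matches the paper's discussion. One refinement: the paper's simulations suggest $\sqrt{n}\,\Xi_n = o_P(1)$ in the subcritical regime, in which case no joint CLT or interference analysis would be needed --- the subcritical lower bound would follow directly from Theorem~\ref{thm:fluc} together with uniform integrability, with $\sigma^2 = \frac{4(d-1)}{d^2} > 0$ automatically --- so the sharper open question your route should target is the decay of $\sqrt{n}\,\Xi_n$ itself, not positivity of a composite variance.
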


\begin{figure}[!t]
\centering
\begin{tabular}{ccc}
    \includegraphics[width = 0.27\textwidth]{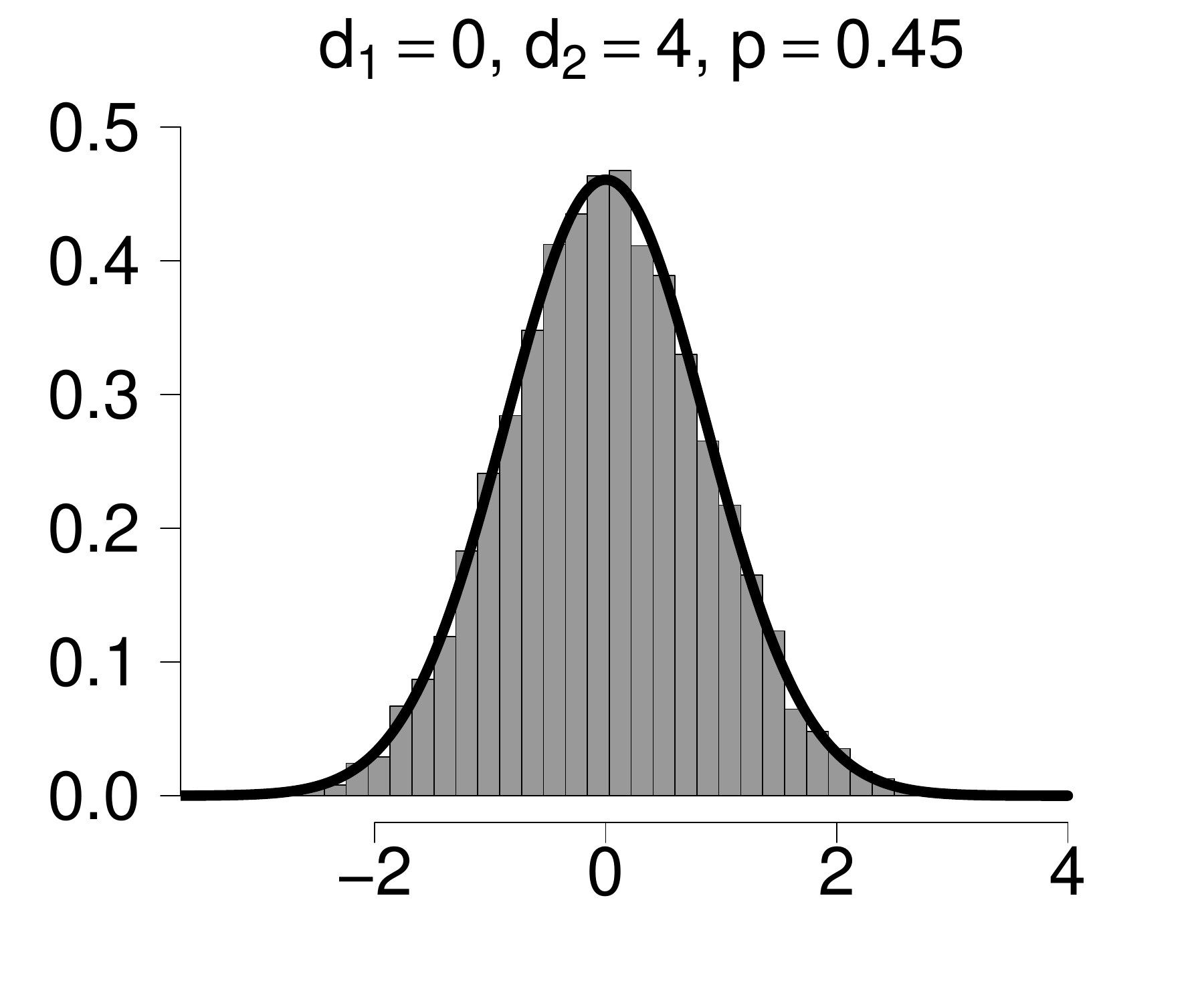} &
    \includegraphics[width = 0.27\textwidth]{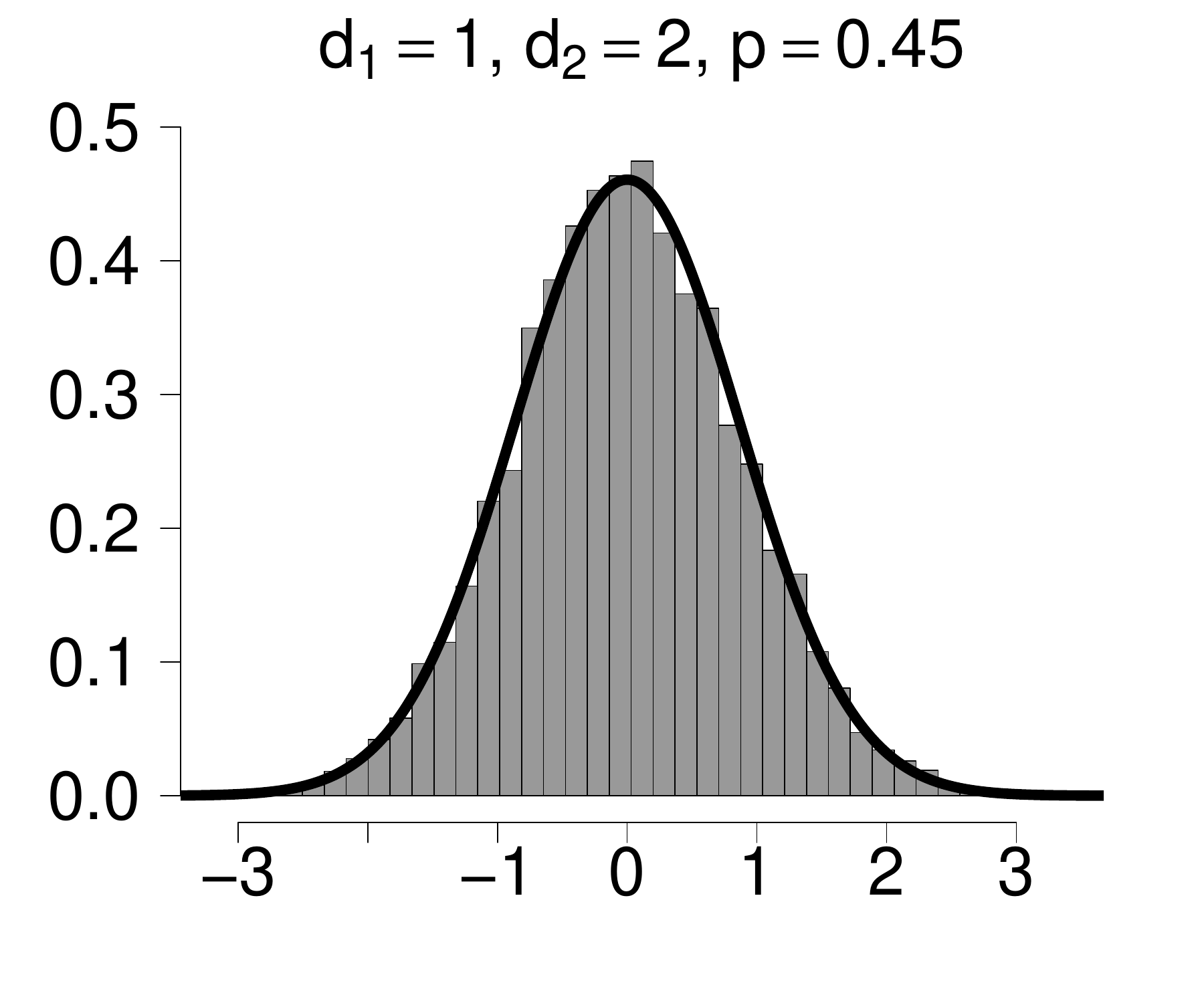} &
    \includegraphics[width = 0.27\textwidth]{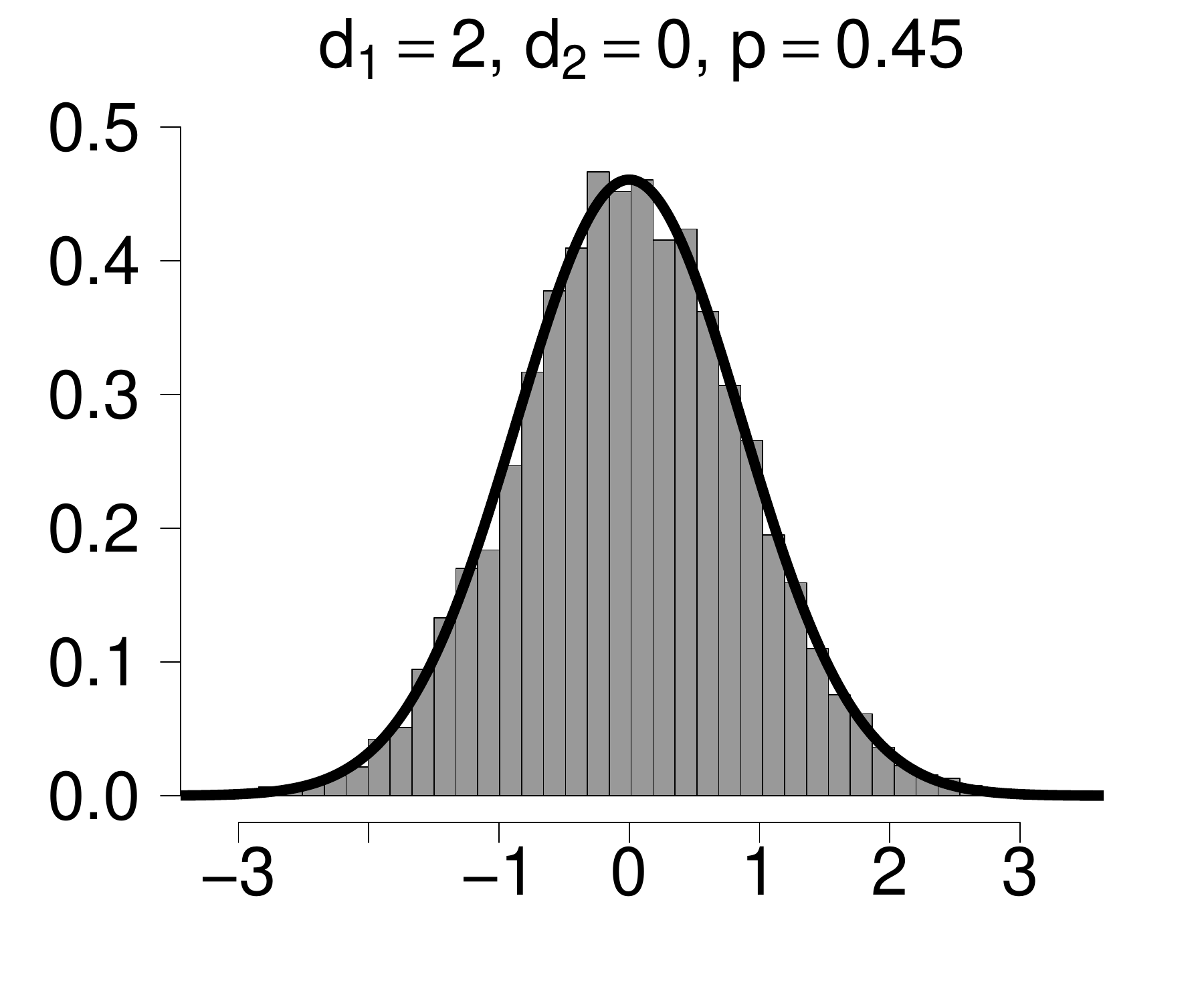} \\
    \includegraphics[width = 0.27\textwidth]{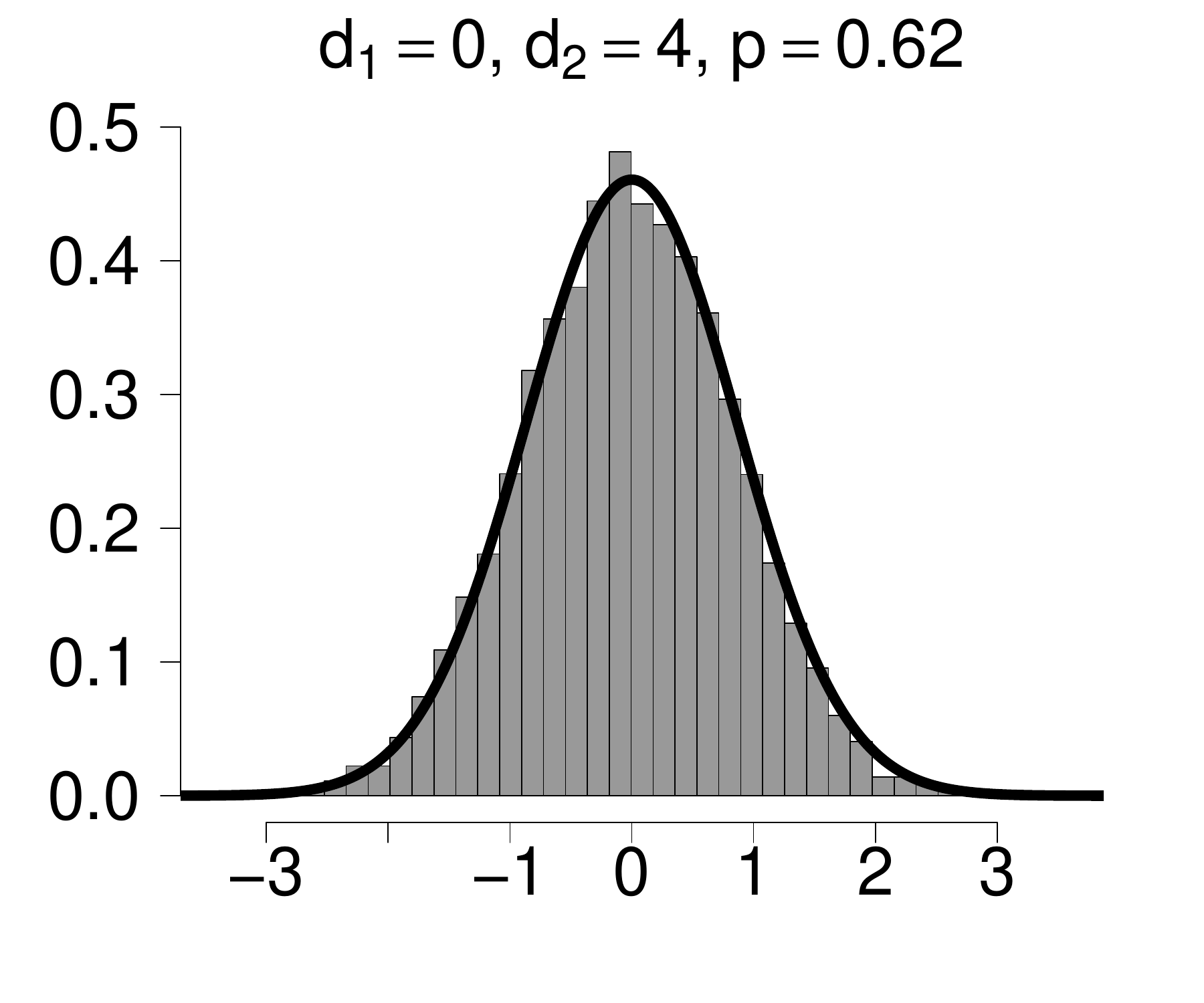} &
    \includegraphics[width = 0.27\textwidth]{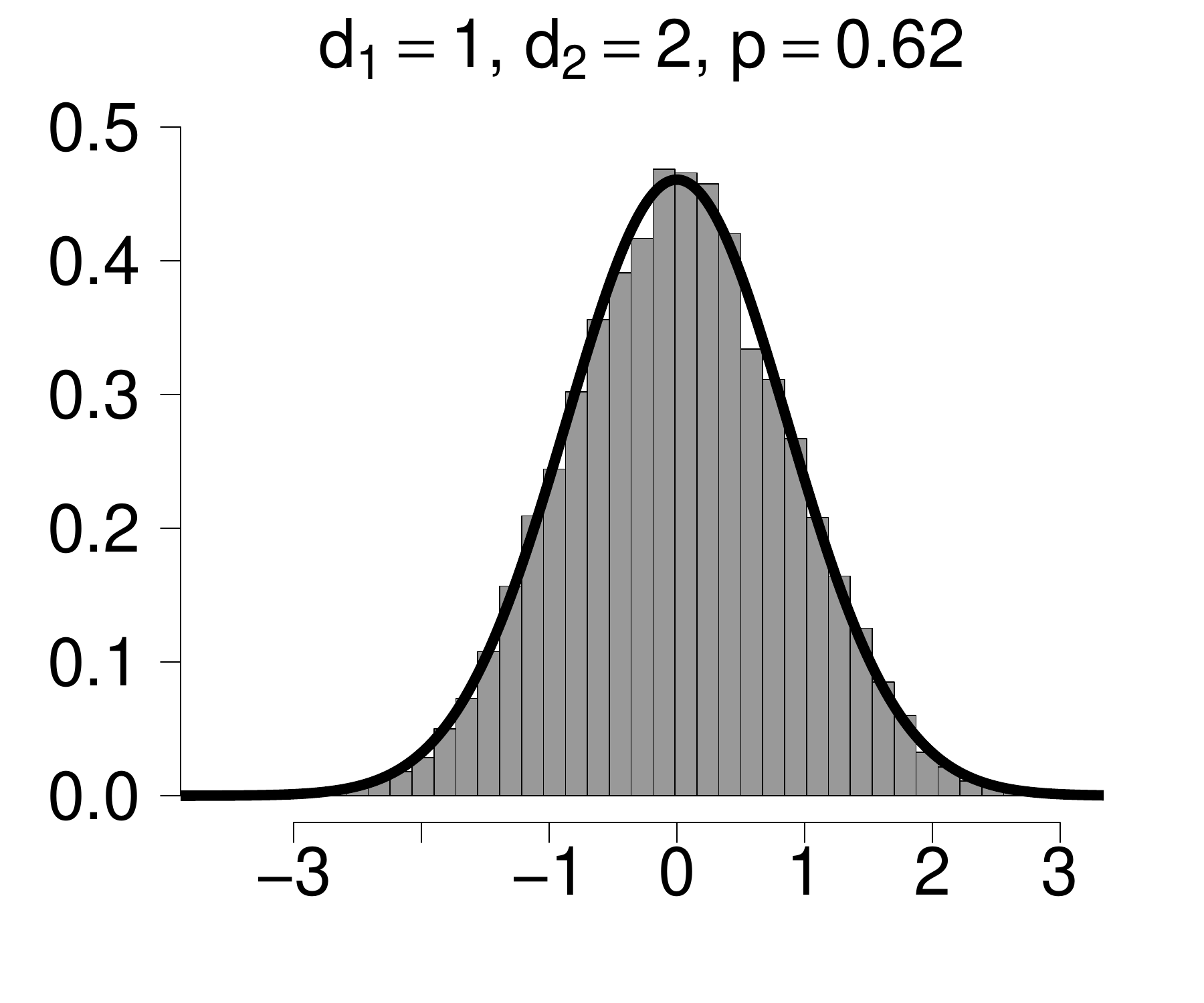} &
    \includegraphics[width = 0.27\textwidth]{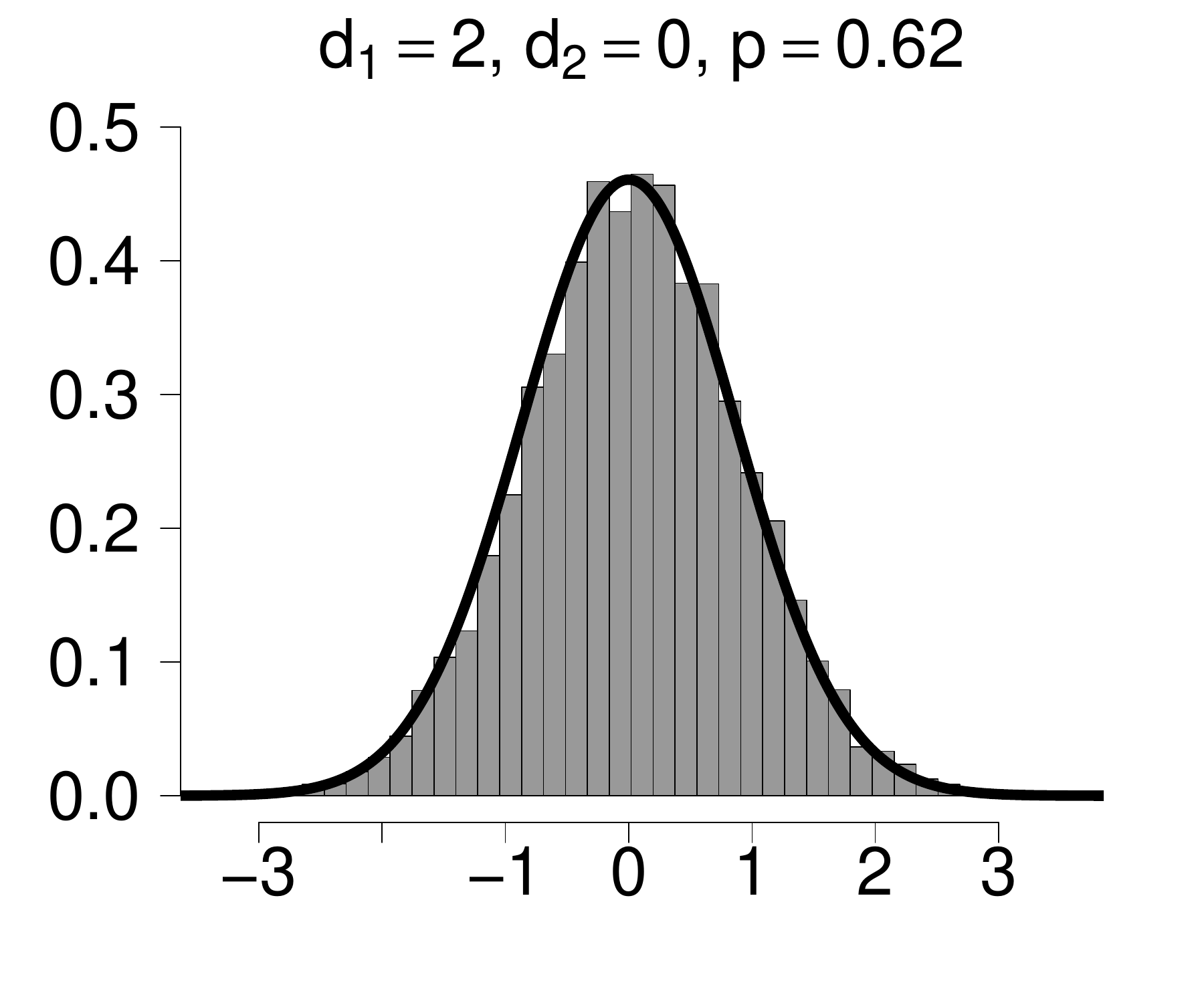} \\
    \includegraphics[width = 0.27\textwidth]{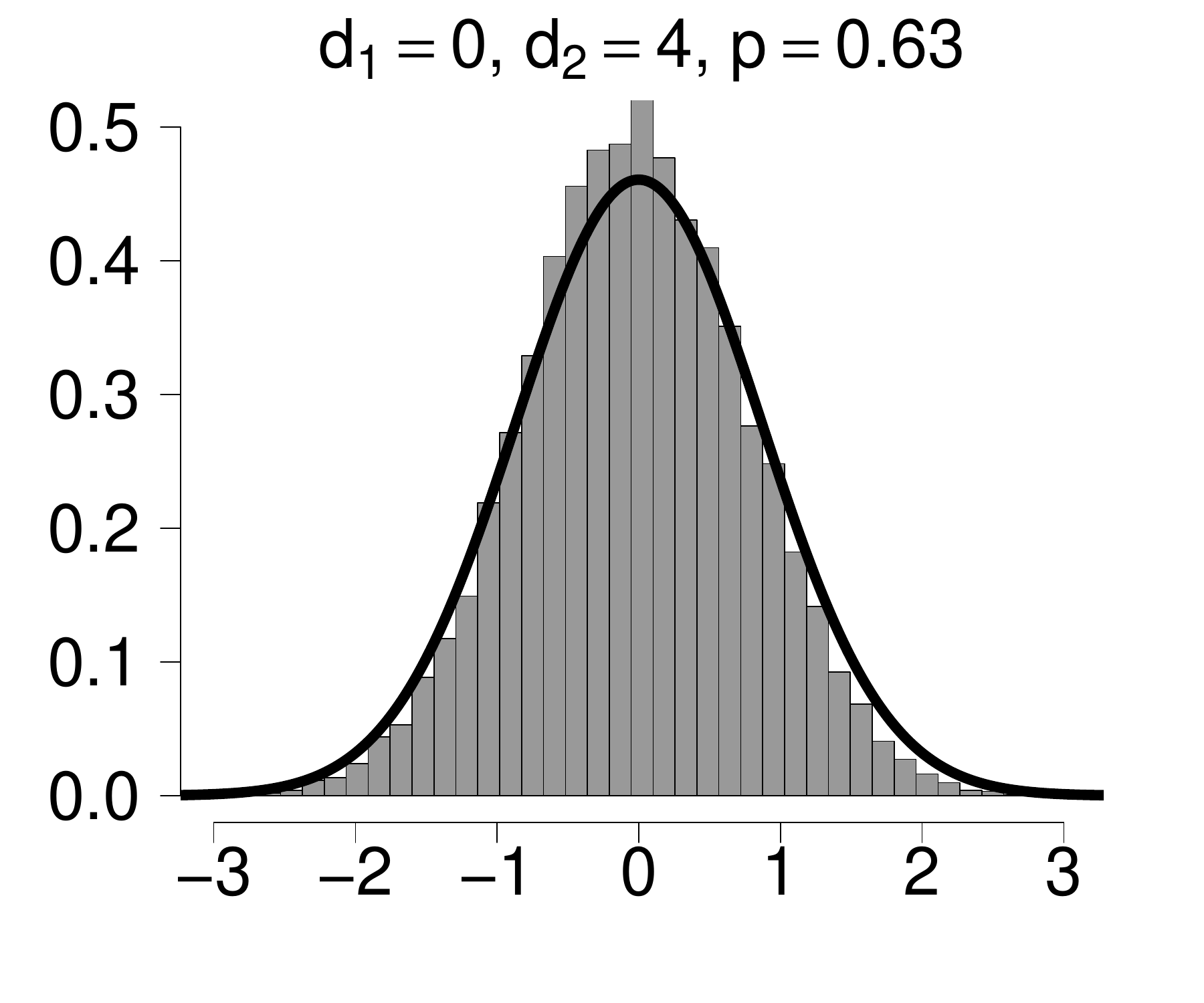} &
    \includegraphics[width = 0.27\textwidth]{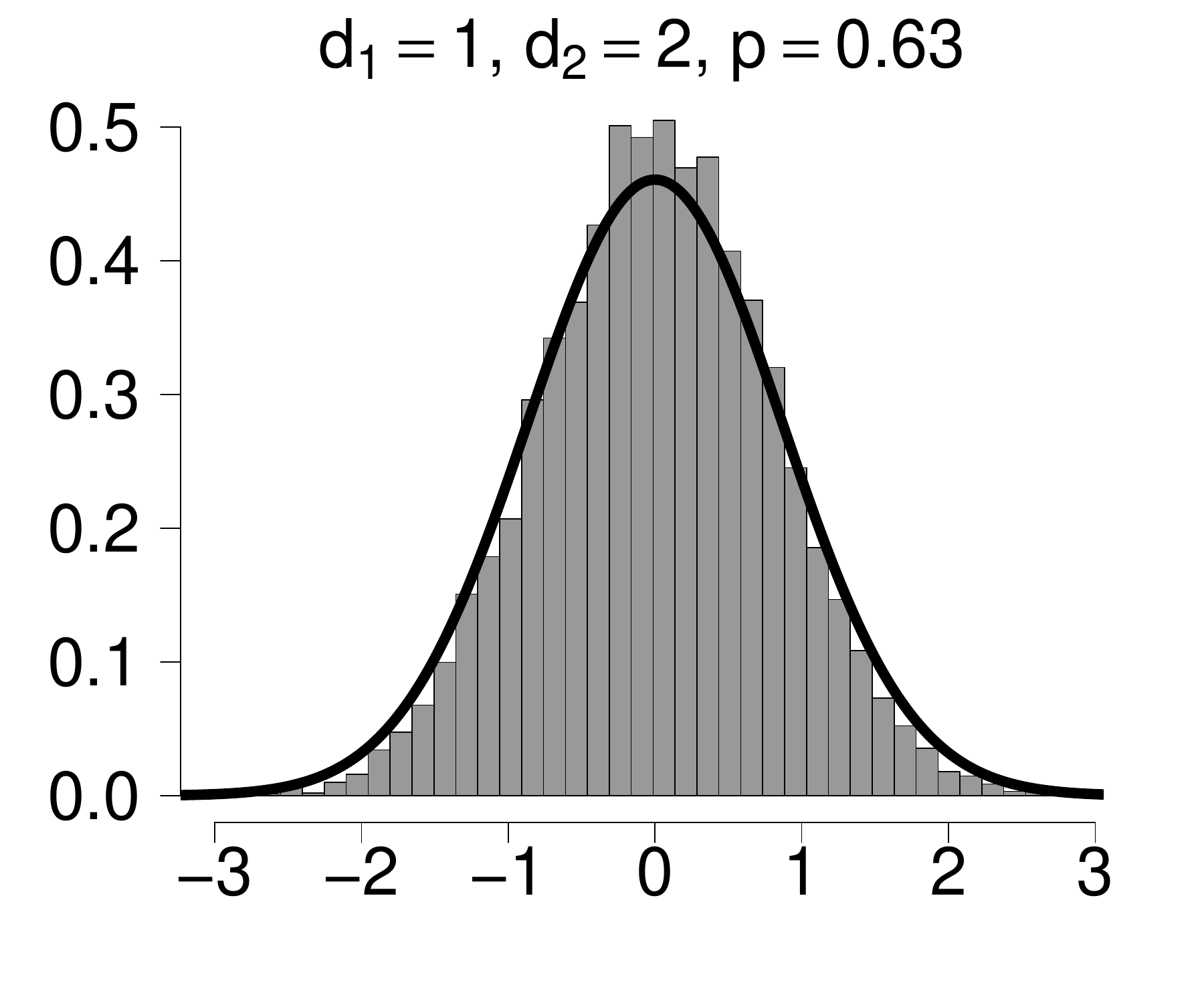} &
    \includegraphics[width = 0.27\textwidth]{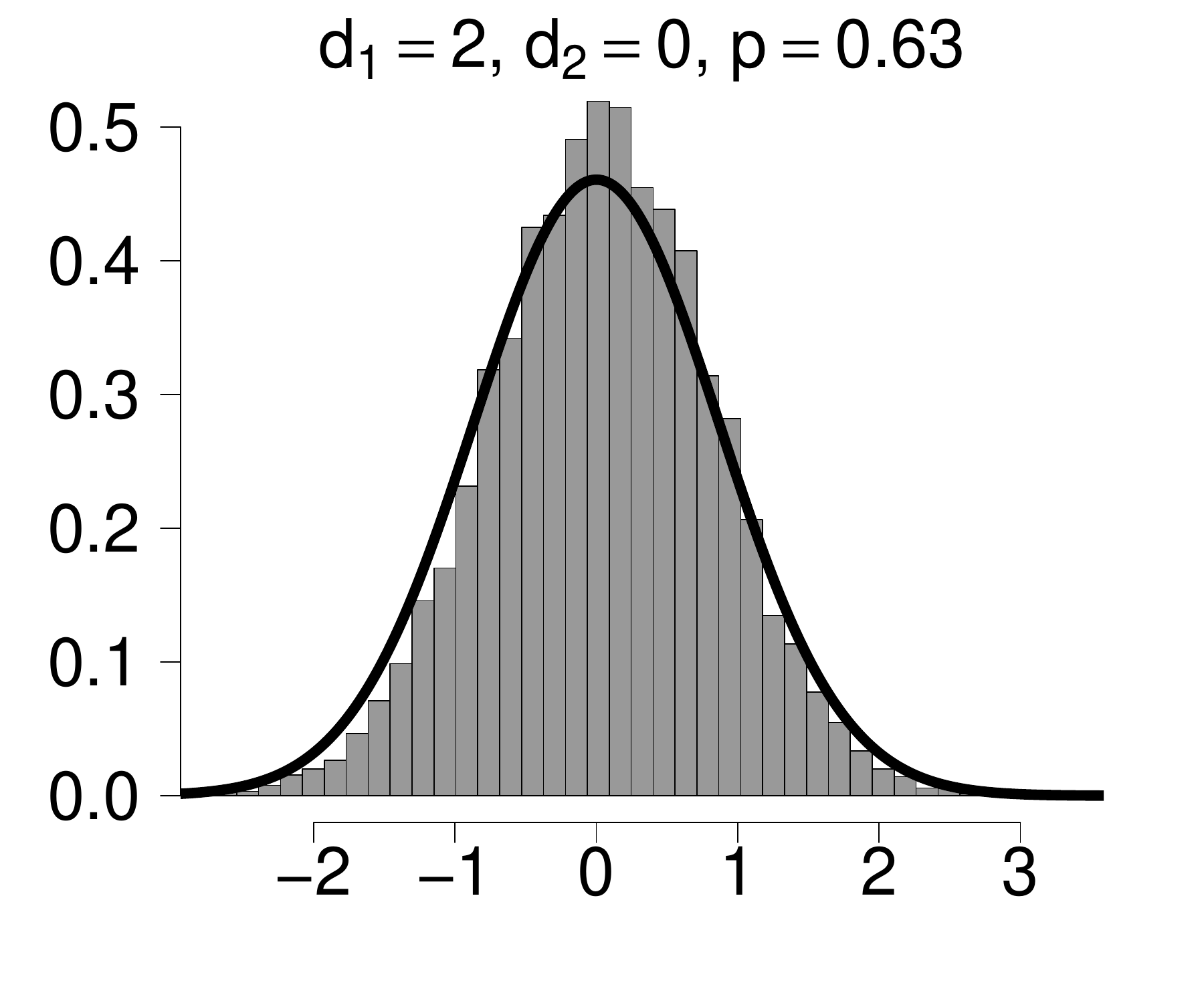} \\
    \includegraphics[width = 0.27\textwidth]{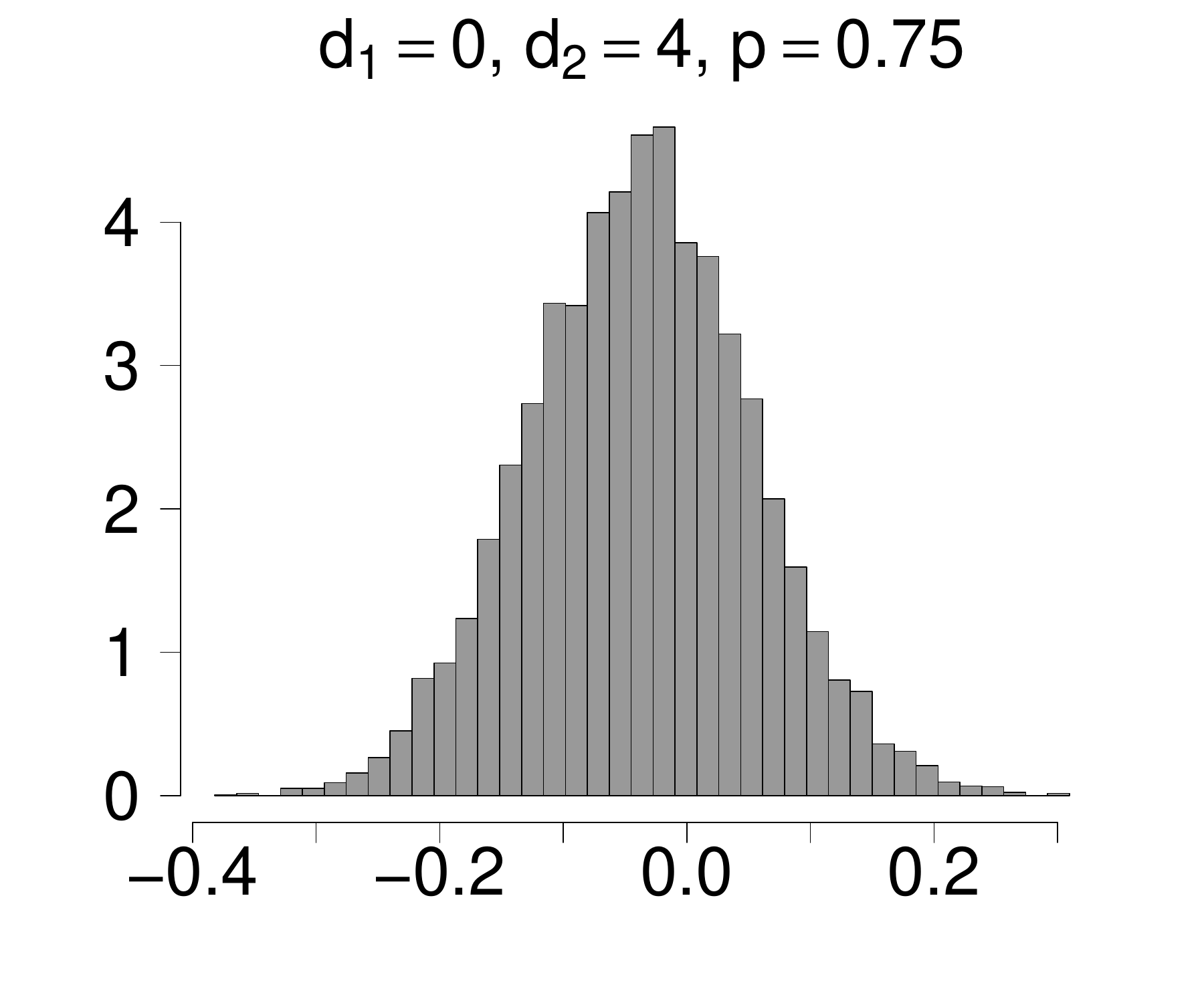} &
    \includegraphics[width = 0.27\textwidth]{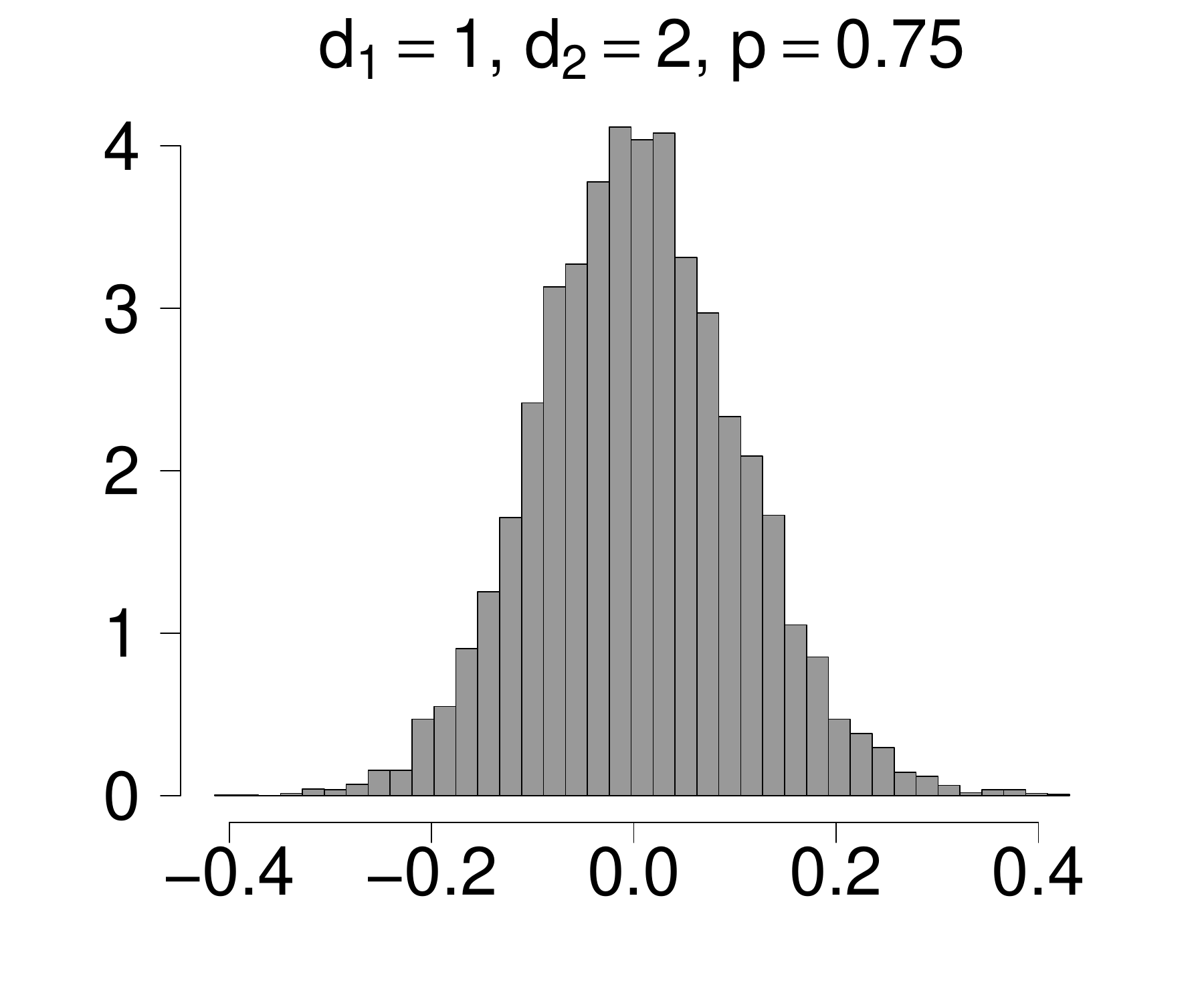} &
    \includegraphics[width = 0.27\textwidth]{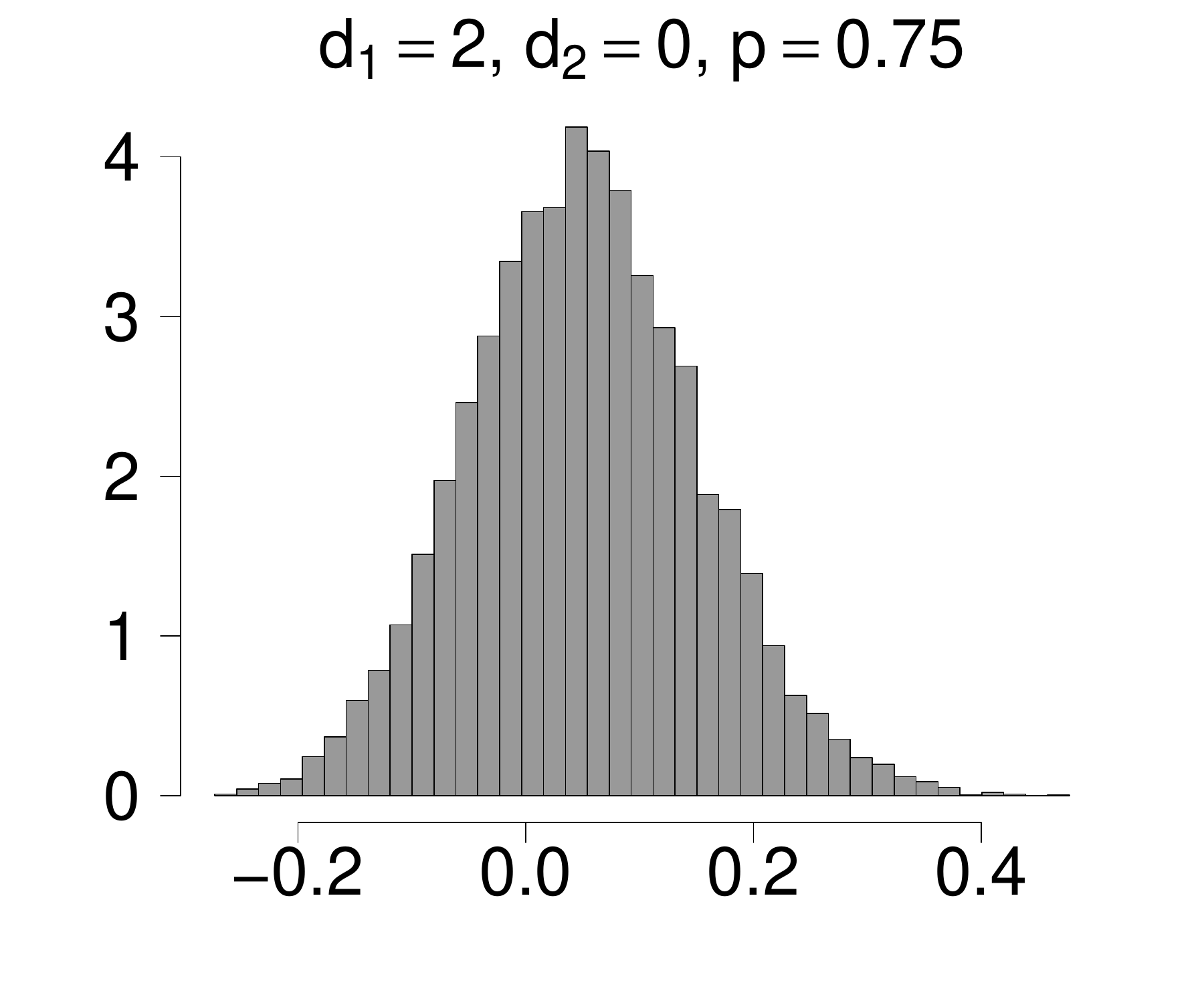} \\
    \includegraphics[width = 0.27\textwidth]{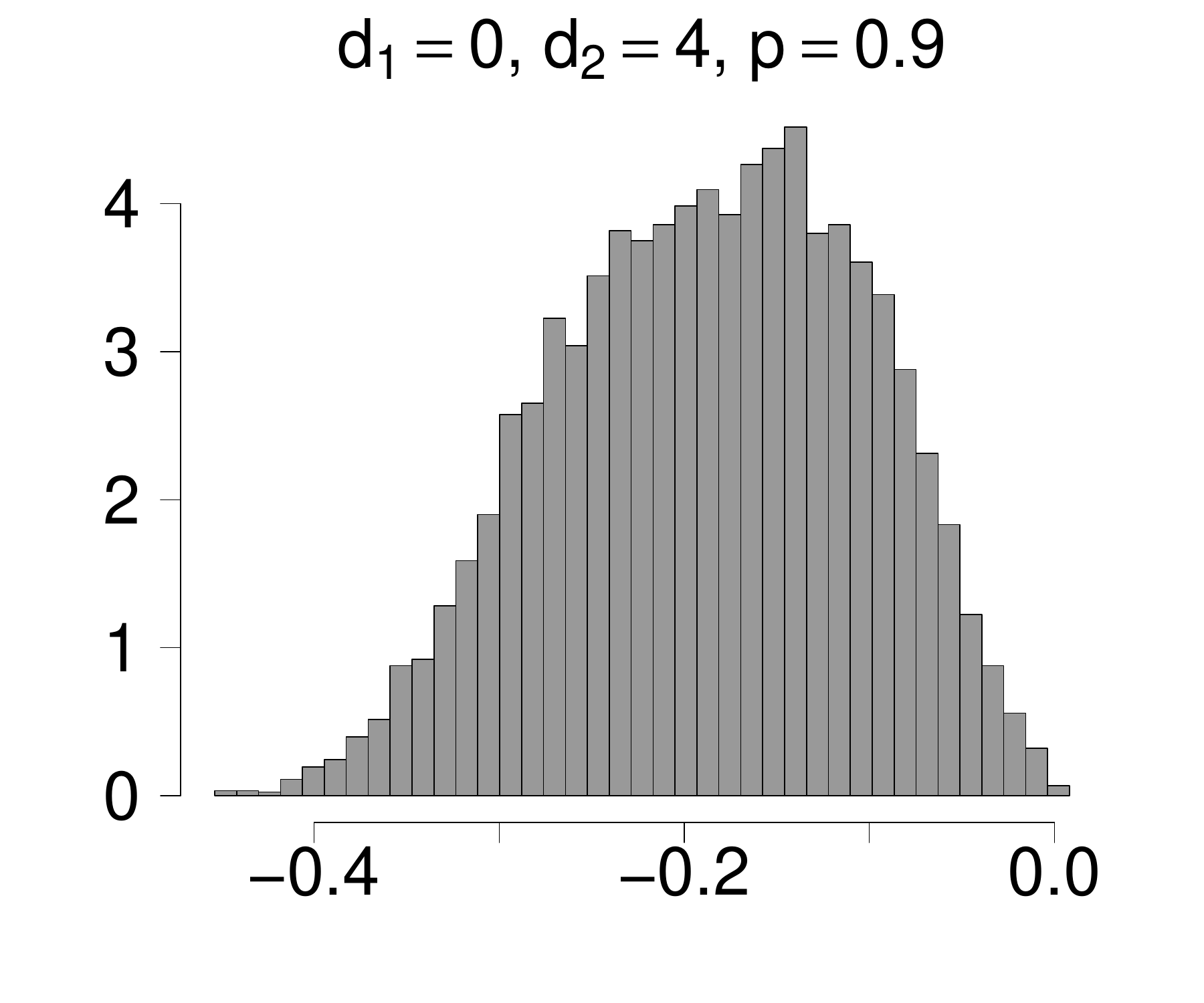} &
    \includegraphics[width = 0.27\textwidth]{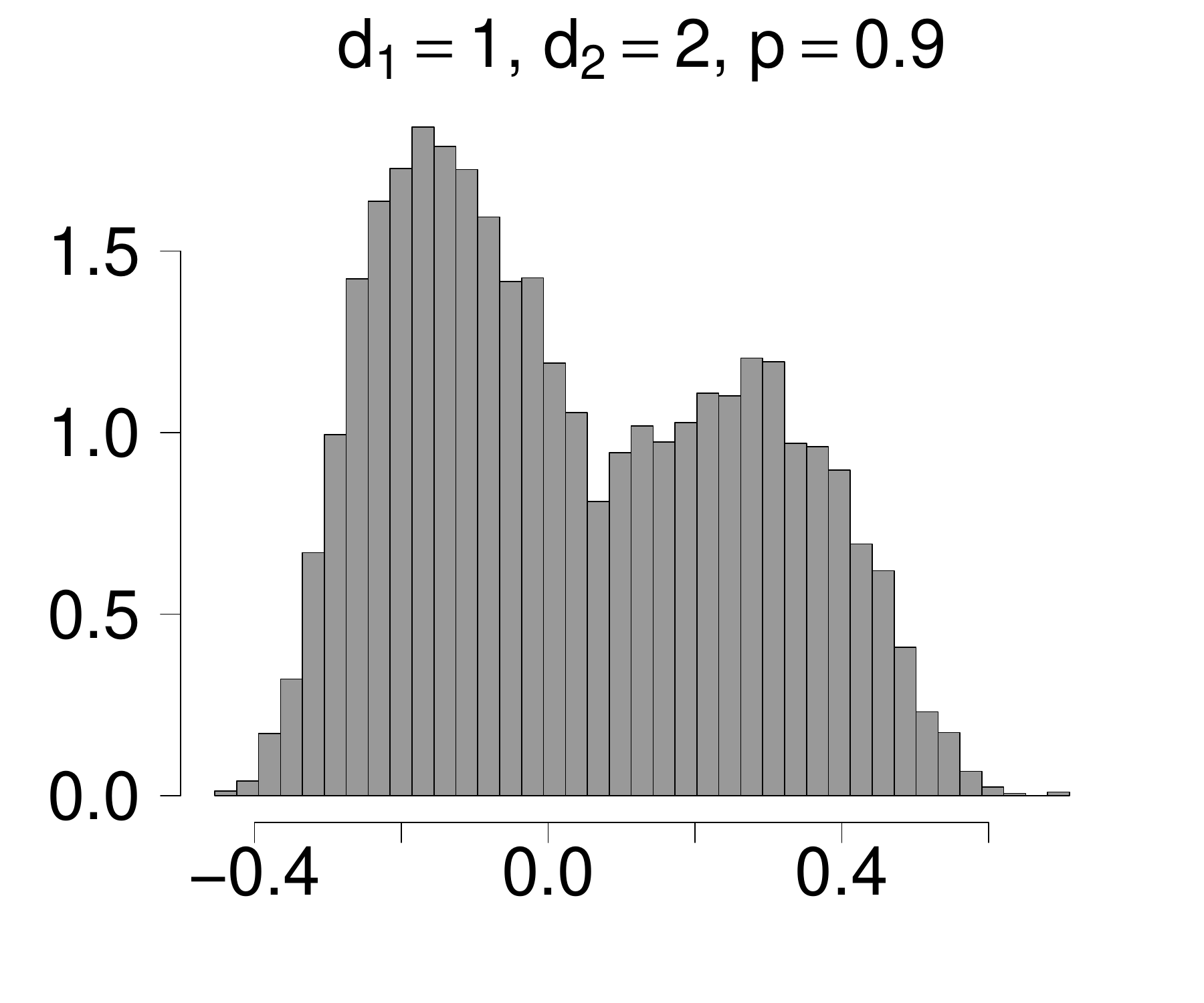} &
    \includegraphics[width = 0.27\textwidth]{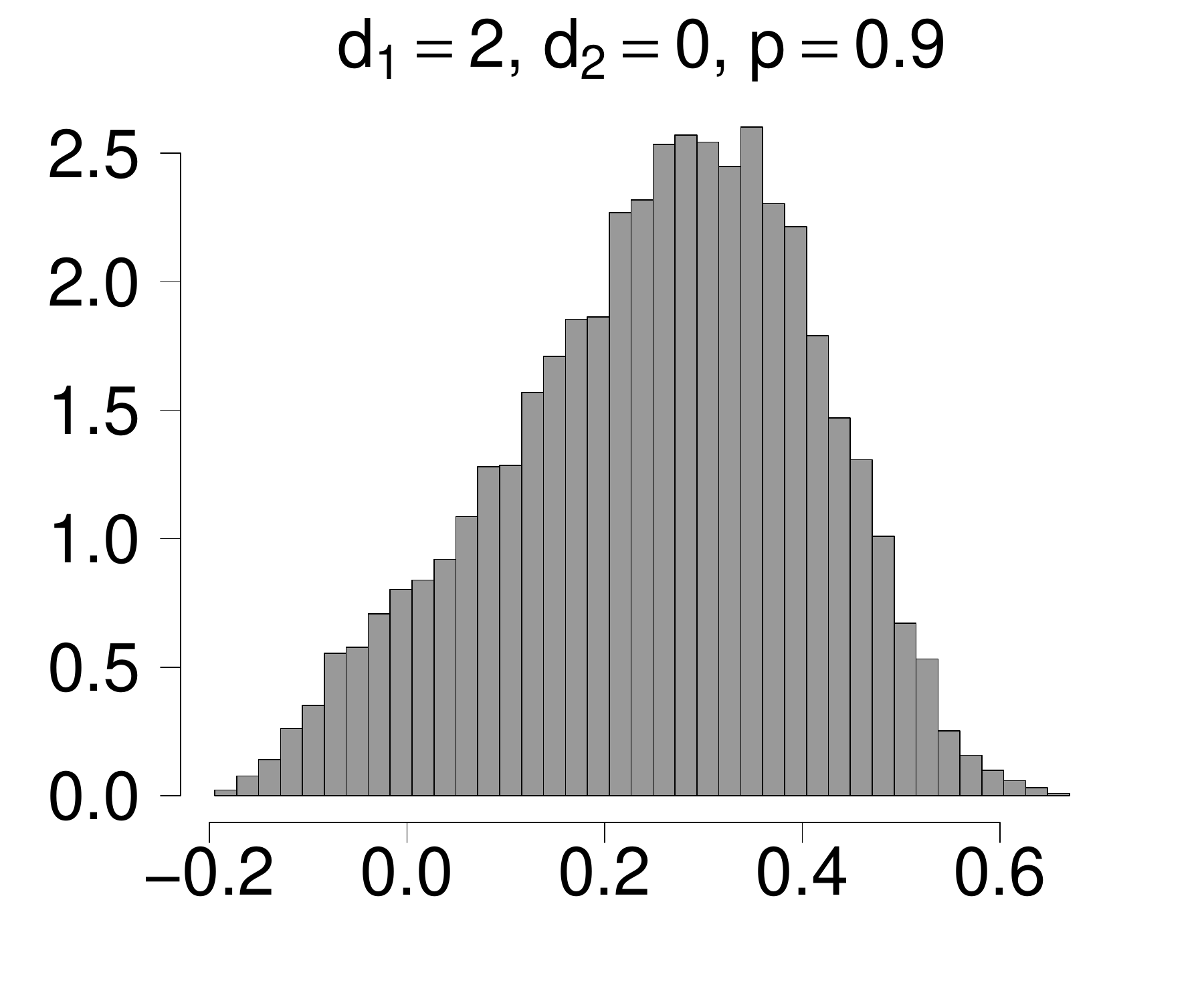} \\
   \hspace{1.5em}(a) & \hspace{1.5em}(b) & \hspace{1.5em}(c)
\end{tabular}
\caption{Histograms of $r_n\big(\frac{\Delta_n}{n} - \frac{d - 2}{d}\big)$ for various values of $p$ on the Cayley graphs of (a) $\bbZ_2^{*4}$, (b) $\bbZ * \bbZ_2^{*2}$, (c) $\bbZ^{*2}$, grouped columnwise, based on $10^4$ Monte Carlo runs with $n = 10^6$ steps each. The critical probability here is $p_4 = 0.625$. The solid black lines depict the $N(0, \frac{4(d - 1)}{d^2})$ density.}
\label{fig:hist}
\end{figure}

In the course of proving Theorem~\ref{thm:rate}, we also obtain estimates on the probability of returning to $e$ at time $n$. This is known to be exponentially decaying in $n$ for the SRW (i.e. $p = 1/d$). Our bounds decay exponentially in $n$ for $0 \le p < 1/2$ (except for the special case $(p, d) = (0, 3)$). For $p \ge 1/2$, or $(p, d) = (0, 3)$, we only get stretched exponential decay in $n$.

Define
\[
    \alpha_{p, d} := \begin{cases}
        1 - \frac{2(1 - p)}{d - 1} & \text{if } 0 \le p \le 1/d, \\
        1 - 2p & \text{if } 1/d < p < 1/2.
    \end{cases}
\]
Notice that for any $d \ge 3$ and any $0 \le p < 1/2$, we have $\alpha_{p, d} > 0$, except in the special case $p = 0, d = 3$.

\begin{theorem}[Return probability estimates]\label{thm:return_to_zero}
    Suppose $0 \le p < 1/2$ and $(p, d) \ne (0, 3)$. Then
    \[
        \bbP(\Delta_n = 0) \le \exp\bigg(-\frac{n\alpha_{p, d}^2}{8}\bigg).
    \]
    Let $p \in [1/2, 1)$ or $(p, d) = (0, 3)$. Then there exists a constant $C_{p, d} > 0$, such that
    \[
        \bbP(\Delta_n = 0) \lesssim \exp\big(-C_{p, d} \cdot r_n\big).
    \]
\end{theorem}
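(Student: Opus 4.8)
The plan is to reduce the event $\{\Delta_n = 0\}$ to a deviation estimate for the distance process, exploiting that $\Delta_n$ carries a positive drift. First I would record the one-step conditional law of the distance. On $\{\Delta_k > 0\}$ a step towards the root occurs precisely when $g_{k+1} = \gamma_k$, and since $D \sim \Unif([k])$ one has $\bbP(g_D = \gamma_k \mid \cF_k) = \ell_k/k$; hence
\[
 q_k := \bbP(g_{k+1} = \gamma_k \mid \cF_k) = \frac{1-p}{d-1} + c\,\frac{\ell_k}{k}, \qquad c := \frac{pd-1}{d-1},
\]
so that $\bbE[\Delta_{k+1} - \Delta_k \mid \cF_k] = 1 - 2q_k$ whenever $\Delta_k > 0$ (and equals $1$ when $\Delta_k = 0$). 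The crucial observation is that $\ell_k/k \in [0,1]$ while $1 - 2q_k$ is an affine, hence monotone, function of $\ell_k/k$, so its minimum over the admissible range is attained at an endpoint.

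For the subcritical return bound ($0\le p<1/2$, $(p,d)\ne(0,3)$) this already suffices, with no concentration input. Indeed, when $c \le 0$ (i.e.\ $p \le 1/d$) the minimal drift is at $\ell_k/k = 0$, giving $1 - \tfrac{2(1-p)}{d-1} = \alpha_{p,d}$, while for $c > 0$ (i.e.\ $1/d < p < 1/2$) it is at $\ell_k/k = 1$, giving $1 - 2p = \alpha_{p,d}$; in both cases $\bbE[\Delta_{k+1}-\Delta_k \mid \cF_k] \ge \alpha_{p,d} > 0$ \emph{deterministically}. Writing $\xi_{k+1} := \Delta_{k+1}-\Delta_k$ and $M_n := \sum_{k=0}^{n-1}\big(\xi_{k+1} - \bbE[\xi_{k+1}\mid\cF_k]\big)$, I obtain a martingale with increments bounded by $2$, and on $\{\Delta_n = 0\}$ the identity $0 = \Delta_n = M_n + \sum_{k<n}\bbE[\xi_{k+1}\mid\cF_k] \ge M_n + \alpha_{p,d}\,n$ forces $M_n \le -\alpha_{p,d}\,n$. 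Azuma--Hoeffding then gives $\bbP(\Delta_n = 0) \le \bbP(M_n \le -\alpha_{p,d} n) \le \exp(-\alpha_{p,d}^2 n^2/(8n)) = \exp(-n\alpha_{p,d}^2/8)$, exactly the claimed bound; positivity of $\alpha_{p,d}$ throughout this range, failing only at $(p,d)=(0,3)$, is what pins down the excluded case.

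For $p \in [1/2,1)$ (and for $(p,d)=(0,3)$) the worst-case drift $\alpha_{p,d}$ is no longer positive, so the deterministic argument collapses and one must use that the adversarial value of $\ell_k/k$ is rarely sustained. Here I would pass to the averaged decomposition
\[
 \Delta_n = M_n + \frac{d-2}{d}\,n + \frac{2}{d}\,Z_n - \frac{2(pd-1)}{d-1}\,n\,\Xi_n, \qquad Z_n := \#\{0\le k<n : \Delta_k = 0\},
\]
obtained by substituting $\tfrac{\ell_k}{k} = \tfrac1d + (\tfrac{\ell_k}{k}-\tfrac1d)$ into the drift and recognising the residual average as $n\Xi_n$. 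Since $Z_n \ge 0$, on $\{\Delta_n = 0\}$ this yields $\tfrac{d-2}{d} n \le |M_n| + \tfrac{2|pd-1|}{d-1} n |\Xi_n|$, whence $\{\Delta_n = 0\} \subseteq \{|M_n| \ge \tfrac{d-2}{2d} n\} \cup \{|\Xi_n| \ge \delta\}$ for a fixed constant $\delta = \delta(p,d) > 0$. The martingale term is again exponentially small by Azuma, so everything reduces to the large-deviation estimate $\bbP(|\Xi_n| \ge \delta) \lesssim n^{-1}$, to be supplied by the concentration analysis of $\Xi_n$ in Section~\ref{sec:prelim}.

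The main obstacle is precisely this last bound in the supercritical range $p > p_d$. There the typical scale of $\Xi_n$ is $r_n^{-1} = n^{-d(1-p)/(d-1)}$, much larger than the diffusive scale $n^{-1/2}$ of $M_n/n$, so $\{\Delta_n = 0\}$ represents a genuine order-one deviation of $\Xi_n$, i.e.\ of order $n^{d(1-p)/(d-1)}$ times its typical size. A second-moment bound alone only yields $n^{-2d(1-p)/(d-1)}$, which is weaker than $n^{-1}$ as $p \to 1$; the sharp rate therefore requires exploiting the high-moment (indeed, near-exponential-moment) decay of the rescaled fluctuations $r_n\Xi_n$ inherited from the underlying P\'olya-type urn, rather than the crude $m\le 2$ control that governs $\Delta_n/n$ in Theorem~\ref{thm:rate}. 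Managing this uniformly over $p\in[1/2,1)$, and in particular shaving the logarithmic factor the moment method leaves behind in the subcritical and critical parts of the range, is the delicate step.
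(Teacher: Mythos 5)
Your overall strategy coincides with the paper's: in the range $0 \le p < 1/2$, $(p,d) \ne (0,3)$, a deterministic endpoint bound on the one-step drift followed by Azuma; in the remaining range, a reduction of $\{\Delta_n = 0\}$ to a deviation event for $\Xi_n$. Your two deviations from the paper are in fact mild simplifications. First, you run Azuma directly on $\Delta_n$, using that the drift at the root equals $1 \ge \alpha_{p,d}$, whereas the paper first couples $\Delta_n$ to a non-reflected sequence (replacing the forced $+1$ step at $e$ by an independent $\Rad(q_{n-1})$ step) so that the drift is $2q_{k-1}-1$ at every time; both are correct. Second, your decomposition $\Delta_n = M_n + \frac{d-2}{d}\,n + \frac{2}{d}\,Z_n - \frac{2(pd-1)}{d-1}\,n\,\Xi_n$ is exactly the paper's \eqref{eq:speed_decomp} scaled by $n$, and since the local-time term $Z_n$ enters with the favourable sign $+\frac{2}{d}$ for \emph{every} $p$, it treats $p \in [1/2,1)$ and $(p,d)=(0,3)$ uniformly; the paper instead handles $(0,3)$ by a separate argument (the trivial bound $\frac{1}{n}\sum_{k<n}\ind(\Delta_k = 0) \le \frac12 + \frac1{2n}$) because in its formulation the indicator correction carries the sign of $1-pd$.

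The gap is your final paragraph. You correctly reduce everything to $\bbP(|\Xi_n| \ge \delta) \lesssim n^{-1}$ for a fixed $\delta = \delta(p,d) > 0$, and you correctly note that the second moment of $\Xi_n$ is too weak for $p > p_d$; but you then declare the needed tail bound a delicate open step requiring near-exponential moments of $r_n\Xi_n$, uniformity over $p \in [1/2,1)$, and the removal of logarithmic factors. None of that is required. The theorem concerns a fixed $p$, and Lemma~\ref{lem:cesaro_ell} already supplies, for any fixed $m > \max\big(2, \tfrac{d-1}{d(1-p)}\big)$, the bound $\bbE|\Xi_n|^m \lesssim n^{-1}$ with \emph{no} logarithmic factor: in that moment range the Ces\`aro average $\frac{1}{n}\sum_{k} r_k^{-m}$ appearing in its proof is $\asymp n^{-1}$ by Lemma~\ref{lem:cesaro_ests}, because the relevant exponent exceeds $1$. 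Markov's inequality then gives $\bbP(|\Xi_n| \ge \delta) \le \delta^{-m}\,\bbE|\Xi_n|^m \lesssim n^{-1}$, which is precisely how the paper concludes its Cases II and III. So your proof is complete once you replace the speculation in the last paragraph by a citation of Lemma~\ref{lem:cesaro_ell} with one sufficiently large finite $m$; the phenomenon you flagged (moments of order $\le 2$ do not suffice in the supercritical regime) is real, but it is already resolved by the high-moment cases of that lemma, not an obstacle beyond the paper's toolkit.
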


\begin{remark}\label{rem:linear}
    The proof of Theorem~\ref{thm:return_to_zero} can be modified to produce similar bounds on $\bbP(\Delta_n \le cn)$ for a sufficiently small $c > 0$.
\end{remark}

\begin{remark}\label{rem:conc}
An inspection of the proof of Theorem~\ref{thm:return_to_zero} reveals that one would get exponential decay of the return probability $\bbP(\Delta_n = 0)$ for $p \ge 1/2$, or for the special case $(p, d) = (0, 3)$, if one is able to establish exponential concentration inequalities for $\Xi_n$ in these regimes.
\end{remark}

\begin{conjecture}
Prove exponential decay of the return probability in all regimes of $p$.
\end{conjecture}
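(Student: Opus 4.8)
\textit{Proof proposal.} By Remark~\ref{rem:conc} the entire question reduces to a \emph{fixed-scale, (stretched) exponential} concentration bound for $\Xi_n$, and the plan is to obtain this by exploiting a structural feature of the model that the current moment-based estimates do not fully use: the step-count process is an \emph{autonomous} P\'olya-type urn, decoupled from the geometry. Writing $\lambda := \frac{pd - 1}{d - 1}$ (so that $\lambda < \tfrac12 \Leftrightarrow p < p_d$) and $\pi_n(a) := N_n(a)/n$, the recipe \eqref{eq:elephant-step} gives $\bbP(g_{n+1} = a \mid \cF_n) = \frac{1-p}{d-1} + \lambda\,\pi_n(a)$, with no dependence on $w_n$ or $\gamma_n$. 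Thus $(\pi_n(a))_{a \in \cS}$ is a Friedman urn with unique equilibrium $(\tfrac1d, \dots, \tfrac1d)$, and a direct computation yields the mean reversion $\bbE[\pi_{n+1}(a) - \tfrac1d \mid \cF_n] = \frac{n + \lambda}{n + 1}\big(\pi_n(a) - \tfrac1d\big)$. First I would linearise the return event: decomposing $\Delta_n$ into its compensator and a martingale $M_n$ with increments bounded by $2$, the definition \eqref{eq:Xi} gives the identity $\Delta_n = \frac{d-2}{d}n - 2\lambda n\,\Xi_n + \frac{2}{d}Z_n + M_n$, where $Z_n := \#\{0 \le k < n : \Delta_k = 0\} \ge 0$. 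Since $\{\Delta_n = 0\}$ forces $|2\lambda\,\Xi_n| + |M_n|/n \ge \frac{d-2}{d}$, there is a fixed $\delta_0 > 0$ (depending on $p, d$) with
\[
    \bbP(\Delta_n = 0) \le \bbP\big(|\Xi_n| \ge \delta_0\big) + \bbP\big(|M_n| \ge \tfrac{d-2}{2d}n\big),
\]
the last term being exponentially small by Azuma--Hoeffding (when $\lambda = 0$ the $\Xi_n$ term is absent and $M_n$ alone suffices).

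The heart of the argument is a uniform sub-Gaussian bound for the urn fractions. Choosing $a_n := \prod_{j < n}\frac{j+1}{j + \lambda} \asymp n^{1 - \lambda}$, the process $M_n^{(a)} := a_n\big(\pi_n(a) - \tfrac1d\big)$ is a martingale whose increments are $a_{n+1}\big(\pi_{n+1}(a) - \bbE[\pi_{n+1}(a) \mid \cF_n]\big)$ and hence bounded by $a_{n+1}/(n+1) \lesssim n^{-\lambda}$. Applying Azuma--Hoeffding with these variable increments and noting that $\sum_{k \le n} k^{-2\lambda} \asymp n^{1-2\lambda}$, $\log n$, or $1$ according as $\lambda < \tfrac12$, $\lambda = \tfrac12$, or $\lambda > \tfrac12$, one checks that in every regime $a_n^2 \big/ \sum_{k \le n} k^{-2\lambda} \asymp r_n^2$, so that
\[
    \bbP\Big(\big|\pi_n(a) - \tfrac1d\big| \ge s\Big) = \bbP\big(|M_n^{(a)}| \ge s\,a_n\big) \le 2\exp\!\big(-c\,s^2 r_n^2\big).
\]
The phase transition at $p_d \Leftrightarrow \lambda = \tfrac12$ thus surfaces exactly here, and the decisive point is the supercritical case $\lambda > \tfrac12$: there the squared increments are summable, so $M_n^{(a)}$ is sub-Gaussian \emph{uniformly in} $n$, which upgrades the merely polynomial second-moment tails underlying the present $n^{-1}$ estimate to stretched-exponential ones with exponent $2(1-\lambda) = \tfrac{2d(1-p)}{d-1}$.

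It remains to pass from this pointwise control to the time average $\Xi_n$. Using $\ell_k = 0$ when $\Delta_k = 0$, one has $|\Xi_n| \le \frac1n\sum_{k=1}^{n-1} D_k$ with $D_k := \max_{a \in \cS}\big|\pi_k(a) - \tfrac1d\big| < 1 - \tfrac1d$. Splitting at $\eta n$ and using $D_k < 1$ on the short initial block,
\[
    |\Xi_n| \le \eta + \max_{\eta n \le k < n} D_k, \qquad \bbP\Big(\max_{\eta n \le k < n} D_k \ge s\Big) \le 2dn\,\exp\!\big(-c\,s^2 r_{\eta n}^2\big),
\]
by a union bound, the estimate above, and monotonicity of $r_k$. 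Taking $\eta = s = \delta_0/2$ makes the right-hand side $\le 2dn\exp\!\big(-c'\,r_{\delta_0 n/2}^2\big)$, which is exponentially small for $p \le p_d$ and stretched-exponentially small for $p > p_d$, the polynomial prefactor being absorbed. Feeding this into the linearisation gives (stretched) exponential decay of $\bbP(\Delta_n = 0)$ in all regimes, covering in particular $p \ge 1/2$ and the borderline pair $(p, d) = (0, 3)$ (where $\lambda = -\tfrac12 < \tfrac12$, so the bound is exponential).

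The step I expect to be the main obstacle is precisely the supercritical uniform sub-Gaussianity: one must verify carefully that the rescaled urn martingale retains square-summable increments and apply the variable-increment Azuma bound cleanly across the sign change of $\lambda$ and the logarithmic critical case $\lambda = \tfrac12$. A secondary delicate point is the time-averaging, where the summand's direction $\gamma_k$ is adapted and correlated with the urn; the crude split above deliberately avoids a fine correlation analysis by paying the deterministic bound $D_k < 1 - \tfrac1d$ on the $O(\eta n)$ initial terms and a union bound on the rest. This is adequate at the fixed scale $\delta_0$ demanded by the return probability, though it is likely lossy at the finer scale $r_n^{-1}$ relevant to the sharper conjecture on $\bbE|\tfrac{\Delta_n}{n} - \tfrac{d-2}{d}|^m$.
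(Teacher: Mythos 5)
You were asked to prove what the paper explicitly poses as an \emph{open problem}: the paper contains no proof of this statement, only the reduction recorded in Remark~\ref{rem:conc} (that (stretched) exponential concentration of $\Xi_n$ would upgrade the conclusion of Theorem~\ref{thm:return_to_zero}) and partial results whose moment-based input (Lemma~\ref{lem:cesaro_ell}, via Lemma~\ref{lem:urn-moments-of-proportions}) yields only polynomial decay when $p \ge 1/2$ or $(p,d) = (0,3)$. Your proposal follows exactly the reduction the paper suggests and then supplies the missing ingredient, and as far as I can verify it is correct. Your linearisation is precisely the paper's decomposition \eqref{eq:speed_decomp} in disguise (note $\frac{2(1-pd)}{d-1} = -2\lambda$ and $Z_n = \sum_{k=0}^{n-1}\ind(\Delta_k = 0) \ge 0$), and dropping $Z_n$ on $\{\Delta_n = 0\}$ gives the fixed-scale dichotomy; the Azuma bound on $M_n$ (increments bounded by $2$) is the same mechanism as the paper's Case~I. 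The genuinely new content is the uniform sub-Gaussian bound for the urn proportions: the mean-reversion factor $\frac{n+\lambda}{n+1}$ checks out (using $\frac{\lambda}{d} + \frac{1-p}{d-1} = \frac{1}{d}$), the rescaled process $a_n\big(\pi_n(a) - \frac{1}{d}\big)$ with $a_n \asymp n^{1-\lambda}$ is a martingale with \emph{deterministic} increment bounds $\asymp n^{-\lambda}$, and the bookkeeping $a_n^2 \big/ \sum_{k \le n} k^{-2\lambda} \asymp r_n^2$ is right in all three regimes, including $\lambda \in \big({-\frac{1}{d-1}}, 0\big]$, which covers $(p,d) = (0,3)$ where the paper's Case~III device degenerates precisely because $\alpha_{0,3} = 0$. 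The passage to $\Xi_n$ via $D_k := \max_{a}|\pi_k(a) - \frac1d|$, the split at $\eta n$, and a union bound is lossy but harmless at the fixed scale $\delta_0$: the prefactor $2dn$ is absorbed since $r_{\eta n}^2 \gtrsim_{\eta} r_n^2 \gg \log n$ in every regime. This strictly improves Cases~II and III of Theorem~\ref{thm:return_to_zero} while recovering Case~I up to constants, and it also delivers exactly the concentration input Remark~\ref{rem:conc} calls for.

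Two caveats to record when writing this up. First, at criticality $p = p_d$ your bound is $\exp(-cn/\log n)$, not exponential; this still decays faster than any stretched exponential $\exp(-n^{\beta})$, $\beta < 1$, so it answers the problem as stated, but you should say so explicitly. Second, what you establish is concentration of $\Xi_n$ at the \emph{fixed} scale $\delta_0 = \frac{d-2}{4d|\lambda|}$ (with the $\lambda = 0$, i.e.\ SRW, case handled by $M_n$ alone), which is all the return probability needs; as you correctly note, the crude union bound over the window would be too lossy at the scale $r_n^{-1}$ relevant to the sharper moment conjecture. Finally, the obstacle you flag in the supercritical case is not really one: $\sum_k k^{-2\lambda} < \infty$ for $\lambda > \frac12$ is immediate, Azuma--Hoeffding with variable deterministic increment bounds applies verbatim across the sign change of $\lambda$, and the only care needed is that $a_n = \prod_{j<n}\frac{j+1}{j+\lambda}$ is well defined and positive, which holds since $\lambda > -1$.
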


We have shown that $r_n \big(\frac{\Delta_n}{n} - \frac{d - 2}{d}\big)$ is tight. We now present a partial result towards establishing the fluctuations of $\frac{\Delta_n}{n} - \frac{d - 2}{d}$.
\begin{proposition}\label{prop:fluc}
    Let $p \in [0, 1)$. Then
    \[
        \sqrt{n}\bigg[\frac{\Delta_n}{n} - \frac{d - 2}{d} - \frac{2(1 - pd)}{d - 1} \cdot \Xi_n \bigg] \convd N\bigg(0, \frac{4(d - 1)}{d^2}\bigg).
    \]
\end{proposition}

\begin{remark}
    For the SRW, i.e. $p = 1 / d$, Proposition~\ref{prop:fluc} reduces to the well-known central limit theorem: $\sqrt{n}(\frac{\Delta_n}{n} - \frac{d - 2}{d}) \convd N(0, \frac{4(d - 1)}{d^2})$.
\end{remark}

Proposition~\ref{prop:fluc} implies that in the critical and the supercritical regimes, one has
\[
    r_n \bigg(\frac{\Delta_n}{n} - \frac{d - 2}{d}\bigg) = \frac{2(1 - pd)}{d - 1} \cdot r_n \Xi_n + o_P(1).
\]
Therefore, the fluctuations of $r_n \big(\frac{\Delta_n}{n} - \frac{d - 2}{d}\big)$ in these regimes may be obtained by studying the fluctuations of $r_n \Xi_n$. This would presumably require sharp estimates on the number of times $\gamma_k = a$ in a sample path of the walk up to time $n$.

We leave the study of fluctuations as an open problem. 
\begin{conjecture}
    Establish distributional convergence of $r_n \big(\frac{\Delta_n}{n} - \frac{d - 2}{d}\big)$ and describe the limits.
\end{conjecture}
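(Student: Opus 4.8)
The plan is to pass to the Doob decomposition of $\Delta_n$, isolate $\Xi_n$ as the predictable drift, and run the martingale central limit theorem on the remaining fluctuation. Write $\cF_n = \sigma(g_1, \ldots, g_n)$. The first step is to compute the one-step conditional law of the increment. On $\{\Delta_n > 0\}$, sampling $D \sim \Unif([n])$ gives $\bbP(g_D = \gamma_n \mid \cF_n) = \ell_n/n$, whence
\[
    q_n := \bbP(g_{n+1} = \gamma_n \mid \cF_n) = \frac{1-p}{d-1} + \frac{pd-1}{d-1}\cdot\frac{\ell_n}{n},
\]
with $\Delta_{n+1} - \Delta_n = -1$ with probability $q_n$ and $+1$ otherwise, while on $\{\Delta_n = 0\}$ one always has $\Delta_{n+1}-\Delta_n = +1$. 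A short simplification yields, on $\{\Delta_n > 0\}$, the crucial identity $\bbE[\Delta_{n+1}-\Delta_n\mid\cF_n] - \frac{d-2}{d} = \frac{2(1-pd)}{d-1}(\frac{\ell_n}{n} - \frac{1}{d})$, whereas on $\{\Delta_n=0\}$ the left side equals $2/d$. Since the bracketed deviation $\frac{\ell_k}{k} - \frac{\ind(\Delta_k>0)}{d}$ vanishes exactly on $\{\Delta_k = 0\}$, summing the decomposition $\Delta_n - \frac{d-2}{d}n = \sum_{k=0}^{n-1}(\bbE[\Delta_{k+1}-\Delta_k\mid\cF_k]-\frac{d-2}{d}) + M_n$ produces the clean relation
\[
    \frac{\Delta_n}{n} - \frac{d-2}{d} - \frac{2(1-pd)}{d-1}\Xi_n = \frac{2}{d}\cdot\frac{R_n}{n} + \frac{M_n}{n},
\]
where $R_n := \#\{0\le k < n : \Delta_k = 0\}$ and $M_n = \sum_{k=0}^{n-1}\xi_{k+1}$ is a martingale with increments $\xi_{k+1} = (\Delta_{k+1}-\Delta_k) - \bbE[\Delta_{k+1}-\Delta_k\mid\cF_k]$ bounded by $2$.

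Next I would dispose of the return term. By Theorem~\ref{thm:return_to_zero}, $\bbP(\Delta_k=0)\lesssim k^{-1}$ in the worst case, so $\bbE R_n = \sum_{k<n}\bbP(\Delta_k=0) \lesssim \log n$, and Markov's inequality gives $R_n/\sqrt n \to 0$ in probability. Hence $\frac{2}{d}R_n/\sqrt n$ is asymptotically negligible, and after multiplying the displayed identity by $\sqrt n$ the theorem reduces to showing $M_n/\sqrt n \convd N(0, \frac{4(d-1)}{d^2})$.

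For the martingale CLT I would verify the two standard hypotheses. The conditional Lindeberg condition is immediate from $|\xi_k|\le 2$. For the predictable quadratic variation, $\bbE[\xi_{k+1}^2\mid\cF_k] = 4q_k(1-q_k)\ind(\Delta_k>0)$, and writing $\delta_k := q_k - \frac1d = \frac{pd-1}{d-1}(\frac{\ell_k}{k}-\frac1d)$ on $\{\Delta_k>0\}$ and expanding about $q_k = 1/d$ gives
\[
    \frac{1}{n}\sum_{k=0}^{n-1}\bbE[\xi_{k+1}^2\mid\cF_k] = \frac{4(d-1)}{d^2}\cdot\frac{n-R_n}{n} + \frac{4(d-2)}{d}\cdot\frac{pd-1}{d-1}\,\Xi_n - \frac{4(pd-1)^2}{(d-1)^2}\cdot\frac{1}{n}\sum_{k=0}^{n-1}\Big(\frac{\ell_k}{k}-\frac{\ind(\Delta_k>0)}{d}\Big)^2.
\]
The first summand tends to $\frac{4(d-1)}{d^2}$ since $R_n/n\to 0$, so the convergence of the quadratic variation reduces to the single estimate
\[
    \frac{1}{n}\sum_{k=0}^{n-1}\Big(\frac{\ell_k}{k}-\frac{\ind(\Delta_k>0)}{d}\Big)^2 \convp 0,
\]
which also forces $\Xi_n\convp 0$ by Cauchy--Schwarz and thus kills the middle term as well.

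The main obstacle is precisely this quadratic estimate. The concentration results of Section~\ref{sec:prelim} control the \emph{signed} Cesàro average $\Xi_n$, whereas the variance computation requires the Cesàro average of the \emph{squared} deviations $(\ell_k/k - 1/d)^2$ to vanish, a strictly stronger statement. I expect to close this gap by establishing a per-time second-moment bound $\bbE\big[(\frac{\ell_k}{k}-\frac{\ind(\Delta_k>0)}{d})^2\big]\to 0$ — that is, that $\ell_k/k$ concentrates around $1/d$ on $\{\Delta_k>0\}$, reflecting that by symmetry each generator is used about $k/d$ times — and then passing to the Cesàro average by bounded convergence, the summands being uniformly bounded. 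With this in place, the martingale CLT delivers $M_n/\sqrt n \convd N(0, \frac{4(d-1)}{d^2})$, and Slutsky's theorem combines this with the negligibility of $R_n/\sqrt n$ to yield the claim.
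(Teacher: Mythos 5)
You are addressing one of the paper's \emph{open problems}, and your proposal does not resolve it: what your argument establishes (modulo the ``main obstacle'', on which more below) is precisely the paper's Theorem~\ref{thm:fluc}, namely the CLT for $\sqrt{n}\big[\frac{\Delta_n}{n} - \frac{d-2}{d} - \frac{2(1-pd)}{d-1}\Xi_n\big]$, and it does so by exactly the paper's own route: the Doob decomposition \eqref{eq:speed_decomp}, negligibility of the local time at $e$ (cf.\ \eqref{eq:local_time_at_root}), and the martingale CLT. The fatal step is the final appeal to Slutsky. Multiplying your displayed identity by $r_n$ gives
\[
    r_n\Big(\frac{\Delta_n}{n} - \frac{d-2}{d}\Big) = \frac{2(1-pd)}{d-1}\, r_n \Xi_n + \frac{2}{d}\cdot\frac{r_n R_n}{n} + \frac{r_n M_n}{n},
\]
and unless $p = 1/d$ the term $r_n\Xi_n$ does not go away. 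Your Cauchy--Schwarz observation only yields $\Xi_n \convp 0$, which is far too weak: in the subcritical regime ($r_n = \sqrt{n}$) one needs $\sqrt{n}\,\Xi_n \convp 0$, whereas Lemma~\ref{lem:cesaro_ell} gives only $\sqrt{n}\,\Xi_n = O_P(1)$; whether this is in fact $o_P(1)$ is exactly what the paper describes as supported by simulations but unproven. The situation is worse in the critical and supercritical regimes, where $r_n = o(\sqrt{n})$, so that $r_n M_n/n \convp 0$ and the putative limit is \emph{entirely} that of $\frac{2(1-pd)}{d-1}r_n\Xi_n$ --- a term about which your argument says nothing, and whose limits the paper expects to depend on $p$ and on the group $\Gamma$ (so your claimed universal Gaussian limit is likely false there). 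In short, the entire difficulty of the open problem is the distributional behaviour of $r_n\Xi_n$, which your proposal never touches; the paper suggests this requires sharp estimates on the number of times $\gamma_k = a$ along the walk, and leaves it open.

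By contrast, the step you single out as the ``main obstacle'' is not an obstacle at all. Since $\big|\frac{\ell_k}{k} - \frac{\ind(\Delta_k>0)}{d}\big| \le \max_{a\in\cS}\big|\frac{N_k(a)}{k} - \frac{1}{d}\big|$ and the urn proportions converge almost surely by \eqref{eq:urn-asconv-proportions} (with moment rates in Lemma~\ref{lem:urn-moments-of-proportions}), the squared deviations tend to $0$ a.s.\ and are uniformly bounded, so their Ces\`{a}ro average vanishes a.s.; this is exactly how the paper controls the quadratic variation in its proof of Theorem~\ref{thm:fluc} (via $G_n^2 \convas (1-\frac{2}{d})^2$). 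So your program, once completed, lands on the already-proved partial result and stops short of the open problem.
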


Our simulations suggest that the limits are Gaussian in the subcritical and critical regimes. In fact, in the subcritical regime the limit appears to be $N(0, \frac{4(d - 1)}{d^2})$, regardless of the value of $p$ (see Figure~\ref{fig:hist}).  In view of Proposition~\ref{prop:fluc}, this suggests that $\sqrt{n} \cdot \Xi_n = o_p(1)$ in the subcritical regime. Our simulations seem to corroborate this (see Figure~\ref{fig:plot-Xi}).

On the other hand, in the supercritical regime, these limits seem to depend on $p$ as well as the group $\Gamma$ (which seems quite plausible in view of Remark~\ref{rem:p=1}).

\begin{figure}[!t]
\centering
\begin{tabular}{ccc}
    \includegraphics[width = 0.315\textwidth]{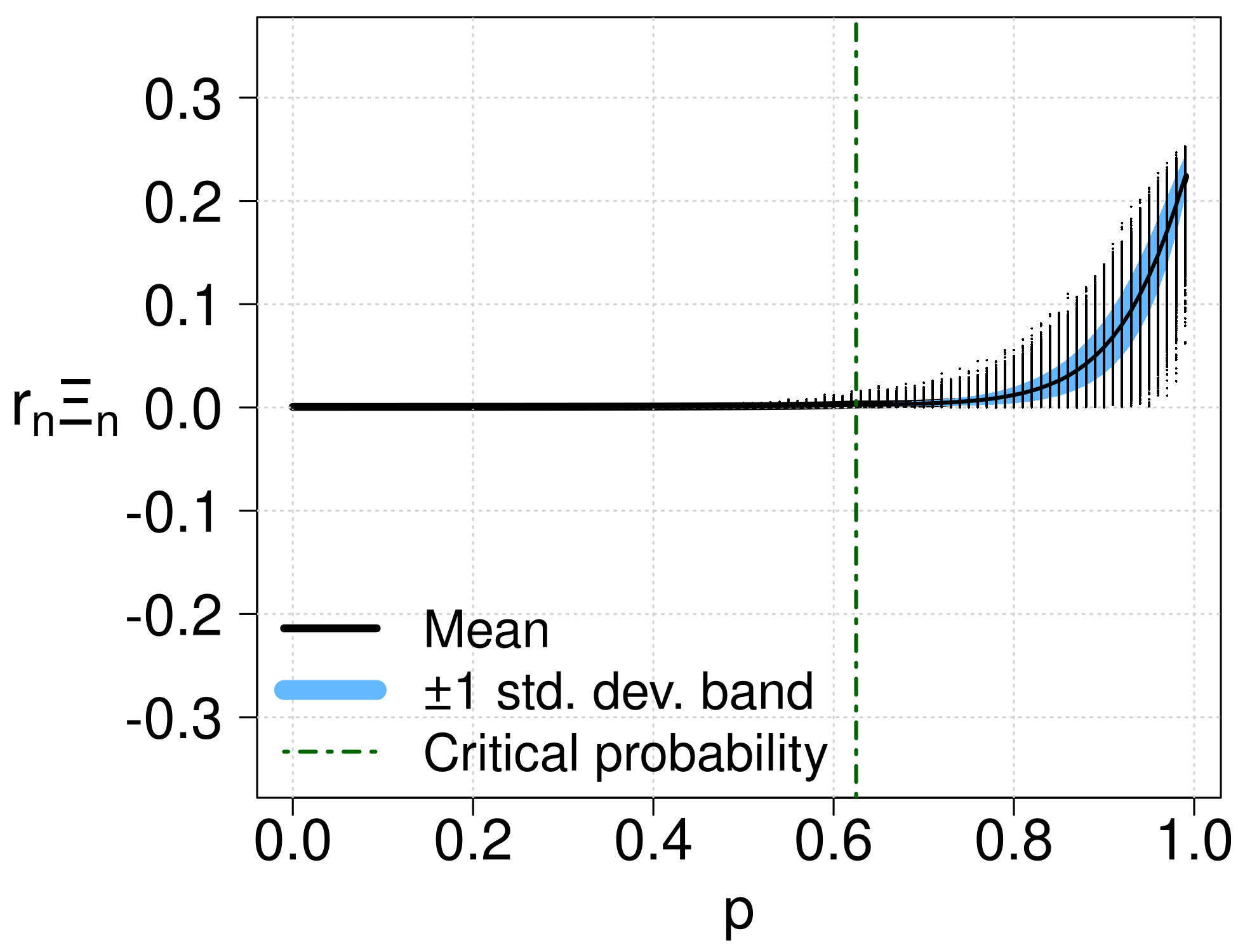} &
    \includegraphics[width = 0.315\textwidth]{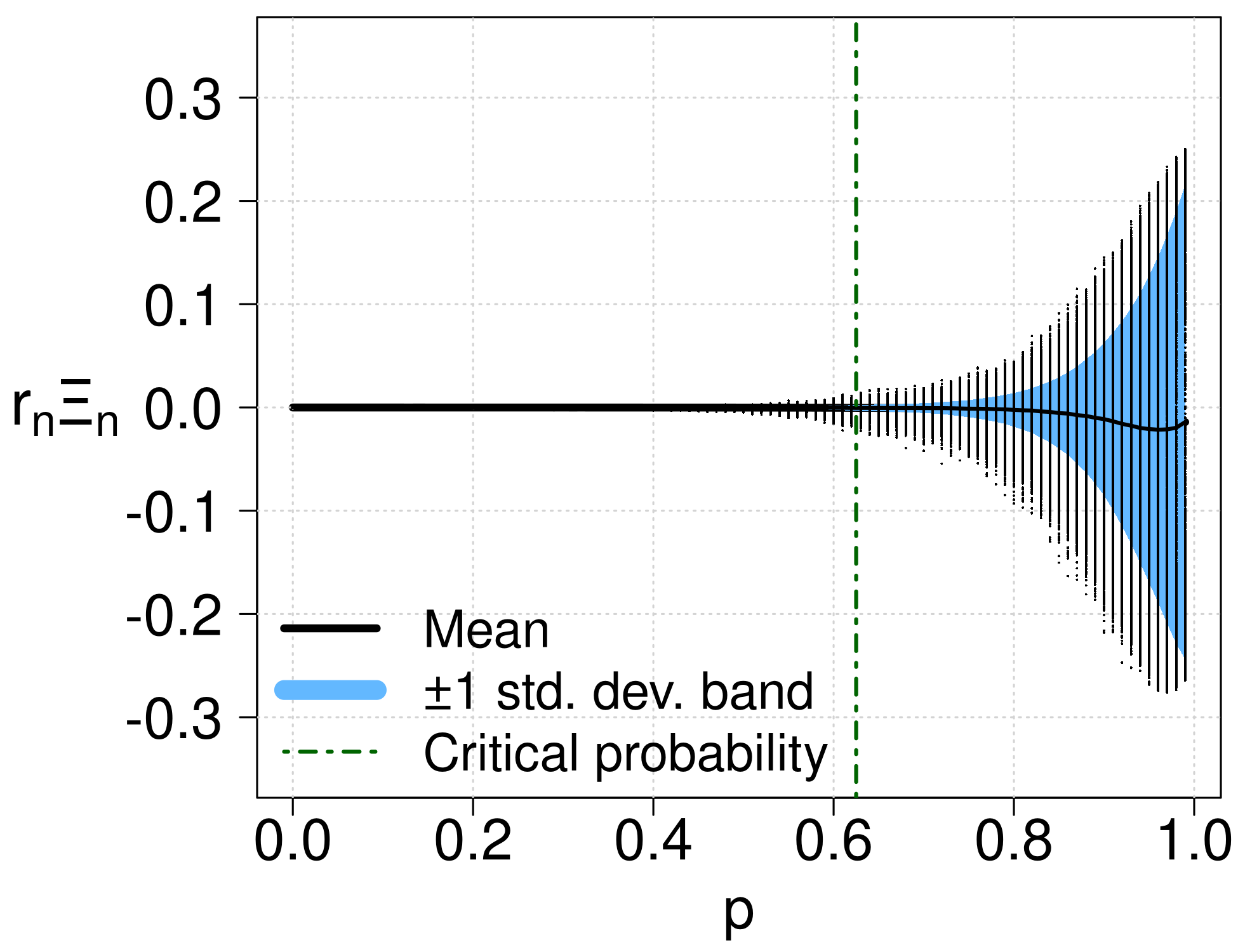} &
    \includegraphics[width = 0.315\textwidth]{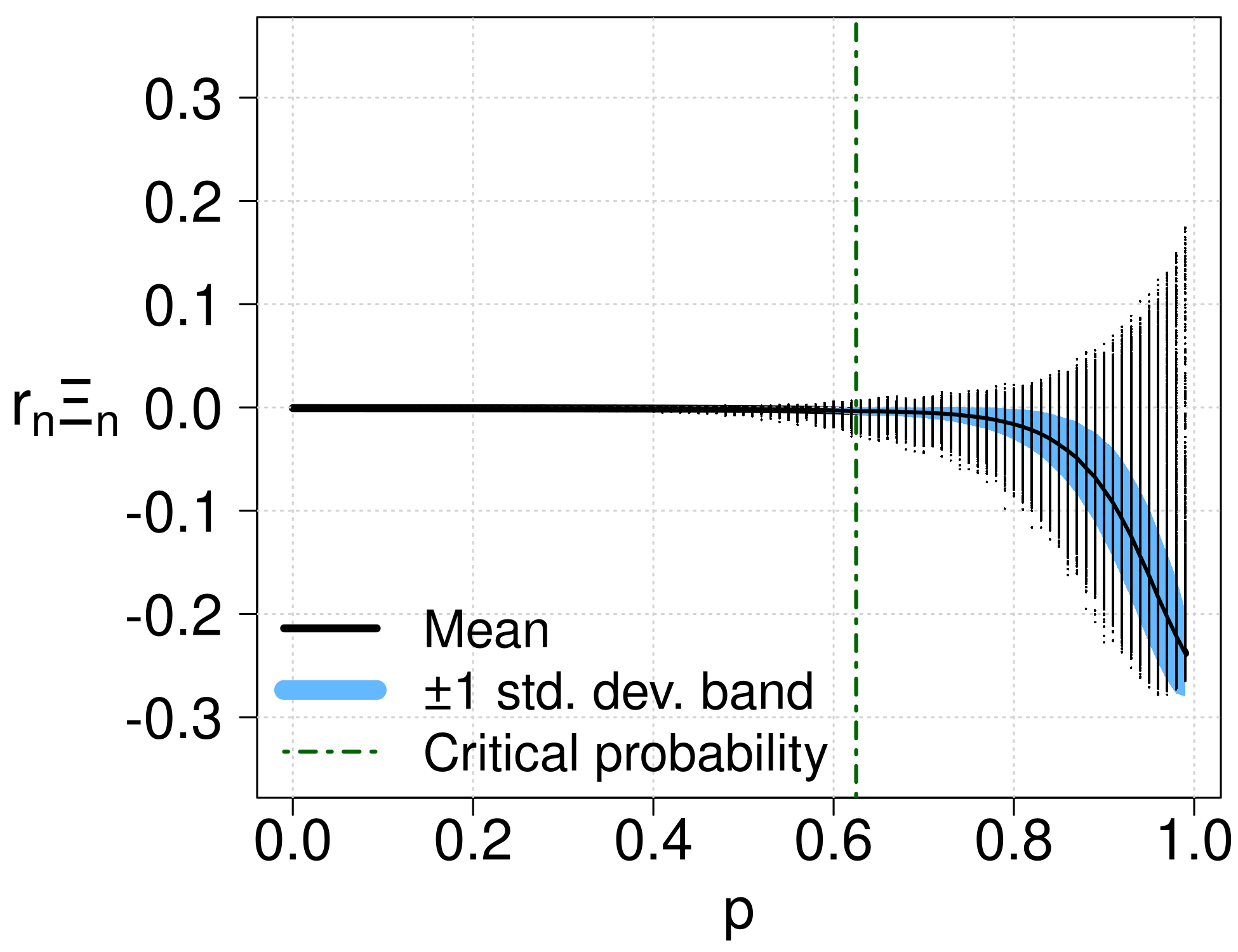} \\
    \hspace{2.25em}(a) & \hspace{2.25em}(b) & \hspace{2.25em}(c)
\end{tabular}
\caption{Scatter plot of $r_n \Xi_n$ for (a) $\bbZ_2^{*4}$, (b) $\bbZ * \bbZ_2^{*2}$, (c) $\bbZ^{*2}$, based on $10^4$ Monte Carlo runs with $n = 10^6$ steps each.}
\label{fig:plot-Xi}
\end{figure}

\section{Preliminaries}\label{sec:prelim}
Let $\cF_n$ denote the $\sigma$-algebra generated by steps up to time $n$. Note that if $w_n = e$, i.e. $\Delta_n = 0$, then
\[
    \Delta_{n + 1} - \Delta_n = 1.
\]
On the other hand, when $w_n \ne e$, i.e. $\Delta_n > 0$, we have $\bbP(g_D = \gamma_n \mid \cF_n) = \frac{\ell_n}{n}$. It is not difficult to see that given $\cF_n$, if $\Delta_n > 0$, then
\begin{equation}
    \Delta_{n + 1} - \Delta_n \sim \Rad(q_n)\footnote{Recall that a Rademacher random variable with parameter $q \in [0, 1]$ takes the value $+1$ with probability $q$ and $-1$ with probability $1 - q$.},
\end{equation}
where
\begin{align}\nonumber
    q_n &= p \cdot \dfrac{n - \ell_n}{n} + (1 - p) \cdot \dfrac{\ell_n}{n} + (1 - p) \cdot \dfrac{n - \ell_n}{n} \cdot \dfrac{d - 2}{d - 1} \\ \nonumber
        &= p + (1 - p) \cdot \frac{d - 2}{d - 1} + \bigg(1 - 2p - (1 - p) \cdot \frac{d - 2}{d - 1}\bigg) \cdot \frac{\ell_n}{n} \\
        &= 1 - \frac{1 - p}{d - 1} + \frac{1 - pd}{d - 1} \cdot \frac{\ell_n}{n}. \label{eq:q_n}
\end{align}
Even when $\Delta_n = 0$, we define $q_n$ by the above formula, so that $q_n = 1 - \frac{1 - p}{d - 1}$.

\subsection{The urn connection}
Consider the process of the $d$-vector of step counts $(N_n(a))_{a \in \cS}$. For any $a \in \cS$, we have
\begin{equation}\label{eq:urn-color-counts-recursion}
    N_{n + 1}(a) = N_n(a) + \ind(g_{n + 1} = a).
\end{equation}
Note that $g_{n + 1} = a$ if $g_D = a$ is chosen as the next step, or if $g_D \ne a$ and step $a$ is taken instead of $g_D$. Thus
\begin{equation}\label{eq:urn-next-color-prob}
    \bbP(g_{n + 1} = a \mid \cF_n) = \frac{N_n(a)}{n} \cdot p + \frac{n - N_n(a)}{n} \cdot \frac{1 - p}{d - 1}.
\end{equation}
Therefore the vector of step counts $(N_n(a))_{a \in \cS}$ is an urn process (a fact first observed by \cite{baur2016elephant}) with $d$ colors and mean replacement matrix
\[
    P = \begin{pmatrix}
        p & \frac{1 - p}{d - 1} & \cdots & \frac{1 - p}{d - 1} \\
        \frac{1 - p}{d - 1} & p & \cdots & \frac{1 - p}{d - 1} \\
        \vdots & \vdots & \ddots & \vdots \\
        \frac{1 - p}{d - 1} & \frac{1 - p}{d - 1} & \cdots & p \\
    \end{pmatrix}_{d \times d}.
\]
Note that $P$ is a symmetric matrix with Perron-Frobenius eigenvalue $\lambda_1 = 1$ with eigenvector $\bone_d$ and second eigenvalue $\lambda_2 = \frac{pd - 1}{d - 1}$, with multiplicity $d - 1$. Notice that $p = \frac{d + 1}{2d}$ is equivalent to the condition $\lambda_2 = \frac{1}{2}\lambda_1$, which defines the critical regime for the urn process described above. This is precisely how the critical probability $p_d = \frac{d + 1}{2d}$ arises. Using the embedding of the urn process into a continuous-time multi-type branching process (due to \cite{athreya1972branching}), one may prove that for any $a \in \cS$,
\begin{equation}\label{eq:urn-asconv-proportions}
    \frac{N_n(a)}{n} - \frac{1}{d} \convas 0.
\end{equation}
See, e.g., Theorem~3.21 of \cite{janson2004functional}, who also obtained fluctuations of these proportions (see Theorems~3.22-3.24 of his paper). In fact, one may also prove the following estimates on the rate of convergence.
\begin{lemma}\label{lem:urn-moments-of-proportions}
    Let $p \in [0, 1)$. For any $m \ge 1$, there exists a constant $C'_{p, d, m} > 0$ such that for any $a \in \cS$, one has
    \[
        \bbE \bigg|\frac{N_n(a)}{n} - \frac{1}{d}\bigg|^m \le (C'_{p, d, m})^m \cdot r_n^{-m}.
    \]
    In fact, for any $m \ge 2$, one may take $C'_{p, d, m} = C'_{p, d} \cdot m$ for some constant $C'_{p, d} > 0$.
\end{lemma}
For the MERW on $\bbZ^d$, a result of this form was worked out in Lemma~4.3 of \cite{qin2025recurrence} for the moments of $\big|\frac{N_n(\be_i) + N_n(-\be_i)}{n} - \frac{1}{d}\big|$. The proof utilises the Burkholder-Davis-Gundy inequality \cite{davis1970intergrability}.  Since that argument works here almost verbatim, we omit the proof. We are explicitly specifying the dependence of $C'_{p, d, m}$ on $m$ (see, e.g., \cite[Chapter 11]{chow2003probability}) as we shall choose an optimal $m$ in a later argument.

\subsection{A concentration result for \texorpdfstring{$\Xi_n$}{Xi-n}.}
As mentioned before, a key ingredient in our analysis will be concentration of $\Xi_n$. We now derive a basic concentration result using the urn connection.

We shall need the following estimates on Ces\`{a}ro averages, obtained easily via integral comparison.
\begin{lemma}\label{lem:cesaro_ests}
Let $\alpha \ge 0$ and $\beta > 0$. We have
\[
    \frac{1}{n}\sum_{k = 1}^n \frac{(\log k)^{\alpha}}{k^{\beta}} \asymp
    \begin{cases}
        \frac{(\log n)^{\alpha}}{n^{\beta}} & \text{if } 0 < \beta < 1, \\
        \frac{(\log n)^{\alpha + 1}}{n^{\beta}} & \text{if } \beta = 1, \\
        \frac{1}{n} & \text{if } \beta > 1.
    \end{cases}
\]
\end{lemma}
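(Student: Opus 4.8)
The plan is to reduce everything to an integral comparison, exploiting that $f(x) := (\log x)^{\alpha} x^{-\beta}$ is eventually monotone. First I would differentiate, obtaining $f'(x) = (\log x)^{\alpha - 1} x^{-\beta - 1}(\alpha - \beta \log x)$, which is negative precisely when $\log x > \alpha/\beta$; hence $f$ is nonincreasing on $[x_0, \infty)$ with $x_0 := \lceil e^{\alpha/\beta} \rceil$. Since $\alpha$ and $\beta$ are fixed, the head $\sum_{k = 1}^{x_0} f(k)$ is a bounded constant, so it suffices to estimate the tail $\sum_{k = x_0}^{n} f(k)$. For this I would invoke the standard sandwich for a nonincreasing nonnegative function, namely $\int_{x_0}^{n + 1} f(x)\,dx \le \sum_{k = x_0}^{n} f(k) \le f(x_0) + \int_{x_0}^{n} f(x)\,dx$, which traps the tail between two integrals that differ only by an additive constant.

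Second, I would evaluate the relevant integral in each regime. For $0 < \beta < 1$, integration by parts (with $u = (\log x)^{\alpha}$, $dv = x^{-\beta}\,dx$) produces the recursion $\int x^{-\beta}(\log x)^{\alpha}\,dx = \frac{x^{1 - \beta}(\log x)^{\alpha}}{1 - \beta} - \frac{\alpha}{1 - \beta}\int x^{-\beta}(\log x)^{\alpha - 1}\,dx$, whose remainder is smaller by a factor of $\log x$; iterating (or simply bounding the remainder) gives $\int^{n} (\log x)^{\alpha} x^{-\beta}\,dx = \frac{n^{1 - \beta}(\log n)^{\alpha}}{1 - \beta}(1 + o(1))$. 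For $\beta = 1$, the substitution $u = \log x$ yields exactly $\int_{1}^{n} (\log x)^{\alpha} x^{-1}\,dx = \frac{(\log n)^{\alpha + 1}}{\alpha + 1}$. For $\beta > 1$, the improper integral $\int_{x_0}^{\infty} f$ converges, so both it and the full series $\sum_{k \ge 1} f(k)$ are finite constants.

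Third, I would assemble the bounds and divide by $n$. In the cases $0 < \beta < 1$ and $\beta = 1$ the leading integral term tends to infinity, so it dominates the $O(1)$ discrepancies coming from the head and from the sandwich; this gives $\sum_{k = 1}^{n} f(k) \asymp n^{1 - \beta}(\log n)^{\alpha}$ and $\asymp (\log n)^{\alpha + 1}$ respectively, and dividing by $n$ reproduces the first two lines of the claim. For $\beta > 1$ the partial sums are trapped between the positive constant $f(2) > 0$ and the convergent series value, so $\sum_{k = 1}^{n} f(k) \asymp 1$, and dividing by $n$ yields the third line.

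There is no serious obstacle here; the only two points needing a little care are (i) isolating the non-monotone initial segment $[1, x_0]$ so that the integral sandwich legitimately applies on the tail, and (ii) using $f(2)$ rather than $f(1)$ as the lower bound when $\beta > 1$, since $f(1) = 0$ whenever $\alpha > 0$. Everything else is routine asymptotics of $\int (\log x)^{\alpha} x^{-\beta}\,dx$.
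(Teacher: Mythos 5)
Your proof is correct and takes exactly the route the paper intends: the paper states Lemma~\ref{lem:cesaro_ests} without a written proof, remarking only that the estimates are ``obtained easily via integral comparison,'' which is precisely your argument (eventual monotonicity of $(\log x)^{\alpha}x^{-\beta}$, the integral sandwich on the tail, and the standard asymptotics of $\int (\log x)^{\alpha}x^{-\beta}\,dx$ in the three regimes). Your two points of care --- isolating the non-monotone head $[1,x_0]$ and using $f(2)$ rather than $f(1)=0$ as the lower bound when $\beta>1$ and $\alpha>0$ --- correctly fill in the details the paper leaves implicit.
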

The following concentration result for $\Xi_n$ is an easy consequence of Lemmas~\ref{lem:urn-moments-of-proportions} and \ref{lem:cesaro_ests}.
\begin{lemma}\label{lem:cesaro_ell}
    Let $p \in [0, 1)$ and $m \ge 1$. There is a constant $\tilde{C}_{p, d, m} > 0$, such that
    \[
        \bbE|\Xi_n|^m \le (\tilde{C}_{p, d, m})^m \cdot r_n^{-m}.
    \]
    In fact, for any $m \ge 2$, one may take $\tilde{C}_{p, d, m} = \tilde{C}_{p, d} \cdot m$ for some constant $\tilde{C}_{p, d} > 0$. Consequently, for any $u \ge \frac{2\tilde{C}_{p, d}\exp(1)}{r_n}$, one has
    \[
        \bbP(|\Xi_n| \ge u) \le \exp\bigg(-\frac{ur_n}{\tilde{C}_{p, d}\exp(1)}\bigg).
    \]
\end{lemma}

\begin{proof}[Proof of Lemma~\ref{lem:cesaro_ell}]
We have
\begin{align*}
    \bigg|\frac{1}{n}\sum_{k = 0}^{n - 1}\bigg(\frac{\ell_k}{k} - \frac{\ind(\Delta_k > 0)}{d}\bigg)\bigg| &\le \frac{1}{n}\sum_{k = 0}^{n - 1}\bigg|\frac{\ell_k}{k} - \frac{\ind(\Delta_k > 0)}{d}\bigg| \\
                                                                                                   &\le \frac{1}{n} \sum_{k = 1}^{n - 1} \sum_{a \in \cS} \bigg|\frac{N_k(a)}{k} - \frac{1}{d}\bigg| \\
                                                                                                   &= \sum_{a \in \cS} \frac{1}{n} \sum_{k = 1}^{n - 1} \bigg|\frac{N_k(a)}{k} - \frac{1}{d}\bigg|.
\end{align*}
Applying Minkowski's inequality and Lemma~\ref{lem:urn-moments-of-proportions}, we have
\begin{align*}
    \bigg[\bbE\bigg|\frac{1}{n}\sum_{k = 0}^{n - 1}\bigg(\frac{\ell_k}{k} - \frac{\ind(\Delta_k > 0)}{d}\bigg)\bigg|^m\bigg]^{\frac{1}{m}} &= \sum_{a \in \cS} \frac{1}{n} \sum_{k = 1}^{n - 1} \bigg[\bbE\bigg|\frac{N_k(a)}{k} - \frac{1}{d}\bigg|^m\bigg]^{\frac{1}{m}} \\
                                                                                                                                           &\le C_{p, d, m} \cdot \sum_{a \in \cS} \frac{1}{n} \sum_{k = 1}^{n - 1} r_k^{-1}.
\end{align*}
By applying Lemma~\ref{lem:cesaro_ests} to the right-hand side, we conclude that for some constant $\tilde{C}_{p, d, m} > 0$, one has
\[
    \big[\bbE|\Xi_n|^m\big]^{\frac{1}{m}} \le \tilde{C}_{p, d, m} \cdot r_n^{-1}.
\]
It is clear from the above proof that $\tilde{C}_{p, d, m} = C'_{p, d, m} \cdot C''_{p, d}$ for some constant $C''_{p, d} > 0$. As such for any $m \ge 2$, we may take $\tilde{C}_{p, d, m} = \tilde{C}_{p, d} \cdot m$ for some constant $\tilde{C}_{p, d} > 0$.

Now, with $m \ge 2$, 
\begin{align*}
    \bbP(|\Xi_n| \ge u) &\le \frac{\bbE|\Xi_n|^m}{u^m} \le \bigg(\frac{\tilde{C}_{p, d} \cdot m}{u r_n}\bigg)^{m} \le \sup_{t \ge 2} \bigg(\frac{\tilde{C}_{p, d} \cdot t}{u r_n}\bigg)^{t}.
\end{align*}
It is easy to check that the supremum is attained at $t_* = \frac{u r_n}{\tilde{C}_{p, d}\exp(1)} \ge 2$, which gives
\[
    \bbP(|\Xi_n| \ge u) \le \exp\bigg(-\frac{u r_n}{C_{p, d}\exp(1)}\bigg).
\]
This completes the proof.
\end{proof}

\section{Proofs of our main results}\label{sec:proofs}
As discussed in the introduction, the non-commutative nature of random walks on groups necessitates proofs with a markedly different flavour from the martingale-based techniques used to analyse elephant random walks on $\mathbb{Z}^d$.
\begin{proof}[Proof of Theorem~\ref{thm:LLN}]
We have
\begin{align*}
    G_n &:= \bbE [\Delta_{n + 1} - \Delta_n \mid \cF_n] \\
    &= (2 q_n - 1) \ind(\Delta_n > 0) + \ind(\Delta_n = 0) \\
                   &= \bigg(1 - \frac{2(1 - p)}{d - 1} + \frac{2(1 - pd)}{d - 1} \cdot \frac{\ell_n}{n}\bigg) \ind(\Delta_n > 0) + \ind(\Delta_n = 0) \\
                   &= 1 - \frac{2(1 - p)}{d - 1} \cdot \ind(\Delta_n > 0) + \frac{2(1 - pd)}{d - 1} \cdot \frac{\ell_n}{n}.
\end{align*}
Therefore $Y_{n + 1} := \Delta_{n + 1} - \Delta_n - G_n$
forms a (bounded) martingale difference sequence. It follows from the strong law of large numbers (SLLN) for martingales that
\[
    \frac{1}{n} \sum_{k = 1}^{n} Y_k \convas 0.
\]
In other words,
\begin{equation}\label{eq:as_conv_diff}
    \frac{\Delta_n}{n} - \bigg[ 1 - \frac{2(1 - p)}{d - 1} \cdot \frac{1}{n} \sum_{k = 0}^{n - 1} \ind(\Delta_k > 0) + \frac{2(1 - pd)}{d - 1} \cdot \frac{1}{n}\sum_{k = 0}^{n - 1} \frac{\ell_k}{k} \bigg] \convas 0.
\end{equation}
Notice that
\begin{equation}\label{eq:as_conv_ell}
    \bigg|\frac{\ell_n}{n} - \frac{\ind(\Delta_n > 0)}{d}\bigg| = \bigg|\frac{N_n(\gamma_n)}{n} - \frac{1}{d}\bigg| \ind(\Delta_n > 0) \le \max_{a \in \cS}\bigg|\frac{N_n(a)}{n} - \frac{1}{d}\bigg| \convas 0,
\end{equation}
where the last assertion follows from \eqref{eq:urn-asconv-proportions}.
It then follows that the Ces\`{a}ro average
\begin{equation}\label{eq:as_conv_ell_cesaro}
    \frac{1}{n}\sum_{k = 0}^{n - 1} \frac{\ell_k}{k} - \frac{1}{d} \cdot \frac{1}{n} \sum_{k = 0}^{n - 1} \ind(\Delta_k > 0) \convas 0. 
\end{equation}
Combining \eqref{eq:as_conv_diff} and \eqref{eq:as_conv_ell_cesaro}, we have
\[
    \frac{\Delta_n}{n} - \bigg[ 1 - \frac{2(1 - p)}{d - 1} \cdot \frac{1}{n} \sum_{k = 0}^{n - 1} \ind(\Delta_k > 0) + \frac{2(1 - pd)}{d - 1} \cdot \frac{1}{d} \cdot \frac{1}{n} \sum_{k = 0}^{n - 1} \ind(\Delta_k > 0) \bigg] \convas 0.
\]
Now,
\begin{align}\label{eq:simplification}\nonumber
     & 1 - \frac{2(1 - p)}{d - 1} \cdot \frac{1}{n} \sum_{k = 0}^{n - 1} \ind(\Delta_k > 0) + \frac{2(1 - pd)}{d - 1} \cdot \frac{1}{d} \cdot \frac{1}{n} \sum_{k = 0}^{n - 1} \ind(\Delta_k > 0) \\ \nonumber
     &= 1 - \bigg(\frac{2(1 - p)}{d - 1} - \frac{2(1 - pd)}{d - 1} \cdot \frac{1}{d}\bigg) \cdot \frac{1}{n} \sum_{k = 0}^{n - 1} \ind(\Delta_k > 0) \\
     &= 1 - \frac{2}{d} \cdot \frac{1}{n} \sum_{k = 0}^{n - 1} \ind(\Delta_k > 0) \\ \nonumber
     &\ge 1 - \frac{2}{d}.
\end{align}
It follows that
 \[
    \liminf_{n \to \infty} \frac{\Delta_n}{n} \ge 1 - \frac{2}{d} > 0 \quad \text{a.s.},
 \]
so that the local time at the identity
\begin{equation}\label{eq:local_time_at_root}
    \sum_{n \ge 1} \ind(\Delta_n = 0) < \infty \quad \text{ a.s.}
\end{equation}
Hence, the Ces\`{a}ro average
\[
    \frac{1}{n} \sum_{k = 0}^{n - 1} \ind(\Delta_k > 0) \convas 1,
\]
which in turn implies by virtue of \eqref{eq:simplification} that
\[
    \frac{\Delta_n}{n} \convas 1 - \frac{2}{d}.
\]
This completes the proof.
\end{proof}

\begin{proof}[Proof of Theorem~\ref{thm:return_to_zero}]
We recall that, given $\cF_n$, if $w_n \ne e$, then
\[
    \Delta_{n + 1} - \Delta_{n} \sim \Rad(q_n),
\]
where $q_n$ is as in \eqref{eq:q_n}. Otherwise, $\Delta_{n + 1} - \Delta_n = 1$. We now couple $\Delta_n$ to another random sequence that eliminates this reflective behaviour at $e$. For $n \ge 1$ and given $\cF_{n - 1}$, let
\[
    Z_n = \begin{cases}
        \Delta_n - \Delta_{n - 1} & \text{if } w_{n - 1} \ne e, \\
        B_n & \text{otherwise,}
    \end{cases}
\]
where $B_n \sim \Rad(q_{n - 1})$ is sampled independently of $g_n$. Since $Z_n \le \Delta_n - \Delta_{n - 1}$ by construction, we have
\begin{equation}\label{eq:return_prob_initial_bound}
    \bbP(\Delta_n = 0) \le \bbP\bigg(\sum_{k = 1}^n Z_k \le 0\bigg) = \bbP\bigg(\frac{1}{n}\sum_{k = 1}^n Z_k \le 0\bigg).
\end{equation}
Therefore it is enough to prove the desired bounds on $\bbP\big(\frac{1}{n}\sum_{k = 1}^n Z_k \le 0\big)$.

Note that
\[
    \bbE[Z_n \mid \cF_{n - 1}] = 2 q_{n - 1} - 1.
\]
Consider the martingale difference sequence $W_n = Z_n - (2q_{n - 1} - 1)$. Clearly, $|W_n| \le 2$. By Azuma's inequality \cite{azuma1967weighted}, for any $\varepsilon > 0$,
\[
    \bbP\bigg(\sum_{k = 1}^n W_k \le -\varepsilon\bigg) \le \exp\bigg(-\frac{\varepsilon^2}{8n}\bigg).
\]
A fortiori,
\begin{equation}\label{eq:Azuma-consequence}
    \bbP\bigg(\frac{1}{n}\sum_{k = 1}^n Z_k \le \frac{1}{n}\sum_{k = 1}^n (2 q_{k - 1} - 1) - \varepsilon\bigg) \le \exp\bigg(-\frac{n\varepsilon^2}{8}\bigg).
\end{equation}
Then, we may write
\begin{align}\label{eq:decomp-mean_Z}\nonumber
    \bbP\bigg(\frac{1}{n}\sum_{k = 1}^n Z_k \le 0\bigg) &= \bbP\bigg(\frac{1}{n}\sum_{k = 1}^n Z_k \le 0, \frac{1}{n}\sum_{k = 1}^n (2 q_{k - 1} - 1) \ge \varepsilon\bigg) \\ \nonumber
                                                        &\qquad\qquad\qquad + \bbP\bigg(\frac{1}{n}\sum_{k = 1}^n Z_k \le 0, \frac{1}{n}\sum_{k = 1}^n (2 q_{k - 1} - 1) < \varepsilon\bigg) \\ \nonumber
                                                        &\le\bbP\bigg(\frac{1}{n}\sum_{k = 1}^n Z_k \le \frac{1}{n}\sum_{k = 1}^n (2 q_{k - 1} - 1) - \varepsilon\bigg) + \bbP\bigg(\frac{1}{n}\sum_{k = 1}^n (2 q_{k - 1} - 1) < \varepsilon\bigg) \\
                       &\le \exp\bigg(-\frac{n\varepsilon^2}{8}\bigg) + \bbP\bigg(\frac{1}{n}\sum_{k = 1}^n (2 q_{k - 1} - 1) < \varepsilon\bigg).
\end{align}
It therefore boils down to estimating $\bbP\big(\frac{1}{n}\sum_{k = 1}^n (2 q_{k - 1} - 1) < \varepsilon\big)$. Towards that end, note that
\begin{equation}\label{eq:mean_incremenet_cesaro_expression}
    \frac{1}{n}\sum_{k = 1}^n (2 q_{k - 1} - 1) = 1 - \frac{2(1 - p)}{d - 1} + \frac{2(1 - pd)}{d - 1} \cdot \frac{1}{n}\sum_{k = 0}^{n-1}\frac{\ell_k}{k}.
\end{equation}

\textbf{Case I ($\boldsymbol{0 \le p < 1/2}$, $\boldsymbol{(p, d) \ne (0, 3)}$).}
We claim that in this regime
\[
    \frac{1}{n}\sum_{k = 1}^n (2 q_{k - 1} - 1) \ge \alpha_{p, d}.
\]
Indeed, if $0 \le p \le 1/d$, then ignoring the last term in \eqref{eq:mean_incremenet_cesaro_expression}, we get
\[
    \frac{1}{n}\sum_{k = 1}^n (2 q_{k - 1} - 1) \ge 1 - \frac{2(1 - p)}{d - 1}.
\]                                               
On the other hand, if $1/d < p \le 1/2$, we have $(1 - pd) < 0$. Therefore, since $\frac{1}{n}\sum_{k = 0}^{n-1}\frac{\ell_k}{k}\le 1$, we get from \eqref{eq:mean_incremenet_cesaro_expression} that
\begin{align*}
    \frac{1}{n}\sum_{k = 1}^n (2 q_{k - 1} - 1) &\ge 1 - \frac{2(1 - p)}{d - 1} + \frac{2(1 - pd)}{d - 1} \\
                                                &= 1 - 2p.
\end{align*}                                   
Therefore, in the regime $0 \le p < 1/2$, $(p, d) \ne (0, 3)$, we get from \eqref{eq:decomp-mean_Z} with the choice $\varepsilon = \alpha_{p, d}$ that
\begin{align*}
    \bbP\bigg(\frac{1}{n}\sum_{k = 1}^n Z_k \le 0\bigg) \le \exp\bigg(-\frac{n\alpha_{p, d}^2}{8}\bigg).
\end{align*}

\textbf{Case II ($\boldsymbol{p \ge 1/2}$).}
For this regime, we shall rewrite $\frac{1}{n}\sum_{k = 1}^n (2q_{k - 1} - 1)$ as follows:
\begin{align*}
    \frac{1}{n}\sum_{k = 1}^n (2 q_{k - 1} - 1) = \bigg(1 - \frac{2}{d}\bigg) + \frac{2(1 - pd)}{d - 1} \cdot \Xi_n - \frac{2(1 - pd)}{d - 1} \cdot \frac{1}{n}\sum_{k = 0}^{n-1} \frac{\ind(\Delta_k = 0)}{d}.
\end{align*}
Since $1 - pd < 0$ in this regime, we may drop the last term to get
\begin{equation}\label{eq:lower_bd_qk}
    \frac{1}{n}\sum_{k = 1}^n (2 q_{k - 1} - 1) \ge \bigg(1 - \frac{2}{d}\bigg) + \frac{2(1 - pd)}{d - 1} \cdot \Xi_n.
\end{equation}
Now for any $0 < \varepsilon < 1 - \frac{2}{d}$, using \eqref{eq:lower_bd_qk}, we have
\begin{align*}
    \bbP\bigg(\frac{1}{n}\sum_{k = 1}^n (2 q_{k - 1} - 1) < \varepsilon\bigg) &\le \bbP\bigg(\frac{1}{n}\sum_{k = 1}^n (2 q_{k - 1} - 1) - \bigg(1 - \frac{2}{d}\bigg) < -\bigg(1 - \frac{2}{d} - \varepsilon\bigg)\bigg) \\
                                                                              &\le \bbP\bigg(\frac{2(1 - pd)}{d - 1} \cdot \Xi_n < -\bigg(1 - \frac{2}{d} - \varepsilon\bigg)\bigg) \\
                                                                              &\le \bbP\bigg(|\Xi_n| > \frac{1 - \frac{2}{d} - \varepsilon}{\frac{2(pd - 1)}{d - 1}}\bigg) \\
                                                                              &\le \exp\big(-C_{p, d, \varepsilon} \cdot r_n \big),
\end{align*}
where in the last line we have used Lemma~\ref{lem:cesaro_ell}, and $C_{p, d, \varepsilon} = \frac{1 - \frac{2}{d} - \varepsilon}{\frac{2(pd - 1)}{d - 1}\tilde{C}_{p, d} \exp(1)}$. Choosing a fixed $\varepsilon \in (0, 1 - \frac{2}{d})$, it now follows from \eqref{eq:decomp-mean_Z} that
\[
    \bbP\bigg(\frac{1}{n}\sum_{k = 1}^n Z_k \le 0\bigg) \lesssim \exp(- C_{p, d} \cdot r_n),
\]
for some constant $C_{p, d} > 0$.

\textbf{Case III ($\boldsymbol{(p, d) = (0, 3)}$).} For this special case, we make use of the trivial bound
\[
    \sum_{k = 0}^{n - 1} \ind(\Delta_k = 0) \le \bigg\lfloor \frac{n + 1}{2} \bigg\rfloor,
\]
which is saturated when the walk alternates between $e$ and a neighbour of $e$. Therefore
\[
    \frac{1}{n} \sum_{k = 0}^{n - 1} \ind(\Delta_k = 0) \le \frac{1}{2} + \frac{1}{2n},
\]
or, equivalently,
\[
    \frac{1}{n} \sum_{k = 0}^{n - 1} \ind(\Delta_k > 0) \ge \frac{1}{2} - \frac{1}{2n}.
\]
Now in this special case, we see from \eqref{eq:mean_incremenet_cesaro_expression} that
\begin{align*}
    \frac{1}{n} \sum_{k = 1}^n (2 q_{k - 1} - 1) &= \frac{1}{n} \sum_{k = 0}^{n - 1} \frac{\ell_k}{k} \\
                                                 &= \Xi_n + \frac{1}{n} \sum_{k = 0}^{n - 1} \frac{\ind(\Delta_k > 0)}{3} \\
                                                 &\ge \Xi_n + \frac{1}{6} - \frac{1}{6n}.
\end{align*}
Hence, for $\varepsilon \in (0, 1/6)$ and $n > \frac{1}{1 - 6\varepsilon}$, we have
\begin{align*}
    \bbP\bigg(\frac{1}{n}\sum_{k = 1}^n (2 q_{k - 1} - 1) < \varepsilon\bigg) &\le \bbP\bigg(\Xi_n + \frac{1}{6} - \frac{1}{6n} < \varepsilon\bigg) \\
                                                                              &\le \bbP\bigg(|\Xi_n| > \frac{1}{6} - \frac{1}{6n} - \varepsilon\bigg) \\
                                                                              &\lesssim \exp\big(-C_{0, 3, \varepsilon} \cdot r_n \big),
\end{align*}
where in the last line we have used Lemma~\ref{lem:cesaro_ell}, and $C_{0, 3, \varepsilon} = \frac{\frac{1}{7} - \varepsilon}{\tilde{C}_{0, 3} \exp(1)}$. Fix an $\varepsilon \in (0, \frac{1}{7})$. As in Case II, it follows from \eqref{eq:decomp-mean_Z} that
\[
    \bbP\bigg(\frac{1}{n}\sum_{k = 1}^n Z_k \le 0\bigg) \lesssim \exp(- C_{0, 3} \cdot r_n),
\]
for some constant $C_{0, 3} > 0$.
\end{proof}

\begin{proof}[Proof of Theorem~\ref{thm:rate}]
We begin with the decomposition
\begin{equation}\label{eq:speed_decomp}
    \frac{\Delta_n}{n} - \frac{d - 2}{d} = \frac{M_n}{n} + \frac{2}{d} \cdot \frac{1}{n} \sum_{k = 0}^{n - 1} \ind(\Delta_k = 0) + \frac{2(1 - pd)}{d - 1} \cdot \Xi_n,
\end{equation}
where, as before, $M_n = \sum_{k = 1}^n Y_k$. For any $m \ge 1$, by Minkowski's inequality, we have
\begin{align}\label{eq:three_parts_upper} \nonumber
    \bigg[\bbE \bigg|\frac{\Delta_n}{n} &- \frac{d - 2}{d}\bigg|^m\bigg]^{\frac{1}{m}} \\ \nonumber
                                        &\le \frac{\big[\bbE|M_n|^m\big]^{\frac{1}{m}}}{n} + \frac{2}{d} \cdot \bigg[\bbE \bigg(\frac{1}{n} \sum_{k = 0}^{n - 1} \ind(\Delta_k = 0)\bigg)^m\bigg]^{\frac{1}{m}}  + \bigg|\frac{2(1 - pd)}{d - 1}\bigg| \cdot \big[\bbE |\Xi_n|^m\big]^{\frac{1}{m}} \\
                                        &\lesssim \frac{\big[\bbE|M_n|^m\big]^{\frac{1}{m}}}{n} + \bigg[\frac{1}{n} \sum_{k = 0}^{n - 1} \bbP(\Delta_k = 0) \bigg]^{\frac{1}{m}} + \big[\bbE |\Xi_n|^m\big]^{\frac{1}{m}}.
\end{align}
On the other hand, rewriting \eqref{eq:speed_decomp} as
\[
    \frac{M_n}{n} = \frac{\Delta_n}{n} - \frac{d - 2}{d} - \frac{2}{d} \cdot \frac{1}{n} \sum_{k = 0}^{n - 1} \ind(\Delta_k = 0) - \frac{2(1 - pd)}{d - 1} \cdot \Xi_n,
\]
we may similarly obtain
\[
    \frac{\big[\bbE|M_n|^m\big]^{\frac{1}{m}}}{n} \le \bigg[\bbE\bigg|\frac{\Delta_n}{n} - \frac{d - 2}{d}\bigg|^m\bigg]^{\frac{1}{m}} + \frac{2}{d} \cdot \bigg[\frac{1}{n} \sum_{k = 0}^{n - 1} \bbP(\Delta_k = 0)\bigg]^{\frac{1}{m}} + \bigg|\frac{2(1 - pd)}{d - 1}\bigg| \cdot \big[\bbE|\Xi_n|^m \big]^{\frac{1}{m}},
\]
or, equivalently,
\begin{equation}\label{eq:three_parts_lower}
    \bigg[\bbE\bigg|\frac{\Delta_n}{n} - \frac{d - 2}{d}\bigg|^m\bigg]^{\frac{1}{m}} \ge \frac{\big[\bbE|M_n|^m\big]^{\frac{1}{m}}}{n} - \frac{2}{d} \cdot \bigg[\frac{1}{n} \sum_{k = 0}^{n - 1} \bbP(\Delta_k = 0)\bigg]^{\frac{1}{m}} - \bigg|\frac{2(1 - pd)}{d - 1}\bigg| \cdot \big[\bbE|\Xi_n|^m\big]^{\frac{1}{m}}.
\end{equation}

In view of \eqref{eq:three_parts_upper} and \eqref{eq:three_parts_lower}, to prove the desired upper and lower bounds, we need to pinpoint the growth rate of $\bbE|M_n|^m$ and give upper bounds to $\frac{1}{n}\sum_{k = 0}^{n - 1} \bbP(\Delta_k = 0)$ and $\bbE|\Xi_n|^m$. 

Lemma~\ref{lem:cesaro_ell} provides upper bounds on $\bbE|\Xi_n|^m$, and using the return probability estimates from Theorem~\ref{thm:return_to_zero}, we have
\[
    \frac{1}{n}\sum_{k = 0}^{n - 1}\bbP(\Delta_k = 0) \lesssim \frac{1}{n}.
\]

We shall now estimate $\bbE|M_n|^m$. The Burkholder-Davis-Gundy inequality gives
\begin{equation}\label{eq:BDJ}
     \bbE \bigg(\sum_{k = 1}^n Y_k^2\bigg)^{m/2} \lesssim \bbE |M_n|^m \lesssim \bbE \bigg(\sum_{k = 1}^n Y_k^2\bigg)^{m/2}.
 \end{equation}
Now the SLLN for the bounded martingale difference sequence $(Y_n^2 - \bbE[Y_n^2 \mid \cF_{n - 1}])_{n \ge 1}$ yields
\[
    \frac{1}{n}\sum_{k = 1}^n Y_k^2 - \frac{\langle M \rangle_n}{n} = \frac{1}{n}\sum_{k = 1}^n (Y_k^2 - \bbE[Y_k^2 \mid \cF_{k - 1}]) \convas 0.
\]
But, as shown in \eqref{eq:conv_quad_var} (in the proof of Proposition~\ref{prop:fluc}), $\frac{\langle M \rangle_n}{n} \convas \frac{4(d - 1)}{d^2}$. Hence
\[
    \frac{1}{n}\sum_{k = 1}^n Y_k^2 \convas \frac{4(d - 1)}{d^2}.
\]
Since the $Y_k$'s are bounded, for any $m \ge 1$, 
\[
    \bbE\bigg|\frac{1}{n}\sum_{k = 1}^n Y_k^2 - \frac{4(d - 1)}{d^2}\bigg|^m \to 0
\]
by dominated convergence. In particular, this implies that
\[
    \bbE\bigg(\sum_{k = 1}^n Y_k^2\bigg)^m \asymp n^m.
\]
Plugging this into \eqref{eq:BDJ}, we see that for any $m \ge 1$,
\[
    \bbE[|M_n|^m] \asymp n^{m / 2}.
\]

Plugging our estimates on $\bbE|M_n|^m$, $\frac{1}{n}\sum_{k = 0}^{n - 1}\bbP(\Delta_k = 0)$ and $\bbE|\Xi_n|^m$ into \eqref{eq:three_parts_upper}, we obtain the stated upper bounds. For the lower bound \eqref{eq:lower-bd}, we have to choose $p$ sufficiently close to $1/d$ (depending on $m$, as the implicit constants in the Burkholder-Davis-Gundy bounds depend on $m$) such that the first term in \eqref{eq:three_parts_lower} dominates the third term (both are $n^{-m/2}$ in order). This completes the proof.
\end{proof}

\begin{proof}[Proof of Proposition~\ref{prop:fluc}]
The proof is an application of the martingale central limit theorem to $M_n = \sum_{k = 1}^n Y_k$. We shall first compute the quadratic variation of $M_n$. Note that
\begin{align*}
    \bbE[Y_k^2 \mid \cF_{k - 1}] &= \bbE[(\Delta_{k} - \Delta_{k - 1} - G_{k - 1})^2 \mid \cF_{k - 1}] \\
                                 &= \bbE[(\Delta_{k} - \Delta_{k - 1})^2 \mid \cF_{k - 1}] - G_{k - 1}^2 \\
                                                &= 1 - G_{k - 1}^2.
\end{align*}
Thus
\begin{align*}
    \langle M \rangle_n = \sum_{k = 1}^n \bbE[Y_k^2 \mid \cF_{k - 1}] &= n - \sum_{k = 0}^{n - 1} G_k^2 = n \bigg[1 - \frac{1}{n}\sum_{k = 0}^{n - 1} G_k^2\bigg].
\end{align*}
Now, using \eqref{eq:as_conv_ell} and \eqref{eq:local_time_at_root}, we see that $G_n^2 \convas \big(1 - \frac{2}{d}\big)^2$, and therefore
\begin{equation}\label{eq:conv_quad_var}
    \frac{\langle M \rangle_n}{n} \convas 1 - \bigg(1 - \frac{2}{d}\bigg)^2 = \frac{4(d - 1)}{d^2}.
\end{equation}
Further, since $|Y_j| \le 4$, we have that for any $\delta > 0$, the Lyapunov condition
\[
    \frac{1}{(\sqrt{n})^{2 + \delta}}\sum_{k = 1}^n \bbE|Y_k|^{2 + \delta} = o(1),
\]
is satisfied. As such, by the martingale central limit theorem, it follows that
\[
    \sqrt{n} \bigg(\frac{\Delta_n}{n} - \bigg[ 1 - \frac{2(1 - p)}{d - 1} \cdot \frac{1}{n} \sum_{k = 0}^{n - 1} \ind(\Delta_k > 0) + \frac{2(1 - pd)}{d - 1} \cdot \frac{1}{n}\sum_{k = 0}^{n - 1} \frac{\ell_k}{k} \bigg] \bigg) \convd N\bigg(0, \frac{4(d - 1)}{d^2}\bigg).
\]
Also, since
\[
    \frac{1}{n} \sum_{k = 0}^{n - 1} \ind(\Delta_k = 0) = O_P\bigg(\frac{1}{n}\bigg),
\]
we may rewrite the above result as
\[
    \sqrt{n}\bigg[\frac{\Delta_n}{n} - \frac{d - 2}{d} - \frac{2(1 - pd)}{d - 1} \cdot \Xi_n \bigg] \convd N\bigg(0, \frac{4(d - 1)}{d^2}\bigg).
\]
This completes the proof.
\end{proof}

\section{Conclusion}\label{sec:conc}
In conclusion, we have introduced a generalisation of the elephant random walk of \cite{schutz2004elephants} to finite generated groups and their Cayley graphs. Focusing on the Bethe lattice, we have shown, by leveraging the connection of the ERW dynamics with urn models first observed in \cite{baur2016elephant}, that memory has no effect on the first order behaviour of the asymptotic speed, thus matching speed of the simple random walk. On the other hand, the rate of convergence to the limiting speed appears to undergo a phase transition, as evidenced by our upper bounds and numerical experiments. Obtaining precise limit distributions is an interesting open question, as is proving exponential decay of the return probability in all memory regimes. Answering these questions would potentially require a more careful decoupling of the underlying urn process governing the step counts from the geometry of the Cayley graph. Investigation of how memory interacts with geometry in other finitely generated infinite groups is another intriguing question, which we hope to return to in the future.

\section*{Acknowledgements}
The author was partially supported by the Prime Minister Early Career Research Grant ANRF/ECRG/2024/006704/PMS from the Anusandhan National Research Foundation, Govt.~of India.

\bibliographystyle{alpha}
\bibliography{refs.bib}

\end{document}